\newcommand{\mb}{\mathbb}
\newcommand{\R}{\mathbb{R}}
\newcommand{\dom}{\mathrm{dom}}
\theoremstyle{plain}
\newtheorem{thm}{Theorem}
\newtheorem{lem}{Lemma}
\newtheorem{cor}{Corollary}
\newtheorem{prop}{Proposition}
\newtheorem{asm}{Assumption}
\newtheorem{exm}{Example}
\newtheorem{defn}{Definition}
\theoremstyle{remark}
\begin{document}

\title{Duality of Optimization Problems \\with Gauge Functions%
\thanks{
This work 
was supported in part by a Grant-in-Aid for Scientific Research (C) (17K00032) from 
Japan Society for the Promotion of Science.
}}

\author{
Shota Yamanaka%
\thanks{Mitsubishi Chemical Corporation, Okayama 712-8054, Japan. Email: \texttt{yamanaka.shota.52w@kyoto-u.jp}}
\ and
Nobuo Yamashita
\thanks{Department of Applied Mathematics and Physics, 
Graduate School of Informatics, Kyoto University,
Kyoto 606-8501, Japan. Email: \texttt{nobuo@i.kyoto-u.ac.jp}}
}

\date{December 14, 2017\\
Revised: December 25, 2020}

\maketitle

\begin{abstract}
\noindent
Recently, Yamanaka and Yamashita proposed the so-called 
positively homogeneous optimization problem, which includes many important
problems, such as the ab\-so\-lute-value and the gauge optimization problems.
They presented a closed form of the dual formulation for the problem, and showed weak duality
and the equivalence to the Lagrangian dual under some conditions.
In this work, we focus on a special positively homogeneous optimization problem, whose objective function and constraints
consist of some gauge and linear functions.
We prove not only weak duality but also strong duality. 
We also study necessary and sufficient optimality conditions associated to the problem.
Moreover, we give sufficient conditions under which
we can recover a primal solution from a Karush-Kuhn-Tucker point of the dual formulation.
Finally, we discuss how to extend the above results to general convex optimization problems by considering the so-called
perspective functions.\\

\noindent
{\bf Keywords:} \ Gauge optimization, duality theory, convex optimization, positively homogeneous functions.
\end{abstract}


\section{Introduction}

The \emph{gauge optimization} (GO) problem is described as follows~\cite{ABD2017,F1987,FM2016,FMP2014}:
\begin{equation}\label{eq:go}
\min_{x \in \mathcal{X}} \quad g(x),
\tag{\mbox{$\mathrm{P_{GO}}$}}
\end{equation}
where $\mathcal{X} \subseteq \mb{R}^n$ is a closed convex set and $g \colon \mb{R}^n \rightarrow \mb{R}\cup\{\infty\}$
is a \emph{gauge} function. Here, we say that $g$ is a gauge function if $g$ is convex, nonnegative, positively homogeneous and satisfies $g(0) = 0$.
Note that GO problems are convex because gauge functions are also convex.
Freund~\cite{F1987} first introduced~\eqref{eq:go}, proposed a dual formulation called the gauge dual
(which differs from the usual Lagrangian dual), and proved some duality results.
He also showed that the class of gauge optimization problems includes the well-known linear programming,
$p$-norm optimization problems with $p \in [1, \infty]$
and convex quadratic optimization problems~\cite{F1987}.

Recently, Friedlander et al.~\cite{FMP2014} considered a specific form of GO problem
in which~$\mathcal{X}$ is described as $\mathcal{X} := \{ x \in \R^n \mid h( b - Ax ) \leq \sigma \}$,
where $h$ is a gauge function, $\sigma$~is a scalar, and $b$, $A$ are, respectively,
a vector and a matrix with appropriate dimensions.
They gave a closed form of its gauge dual.
Afterwards, Friedlander and Mac\^edo~\cite{FM2016} applied this gauge duality
to solve low-rank spectral optimization problems.
Aravkin et al.~\cite{ABD2017} presented some theoretical results for the GO problem.
In particular, they gave optimality conditions, and a way to recover a primal solution from the gauge dual.
In that paper, they also extended their results to a more general convex optimization problem, 
where $g$ and $h$ were not necessarily gauge functions.
In addition, they proposed the perspective duality, which is an extension of the gauge duality.

The gauge optimization problems in these previous works~\cite{ABD2017,F1987,FM2016,FMP2014}
do not involve linear terms in their objective functions.
Therefore, these GO frameworks cannot directly handle linear conic optimization problems. 
More recently, Yamanaka and Yamashita~\cite{YY2017} considered
the following \emph{positively homogeneous optimization} (PHO) problem:
\begin{equation}
\left.
\begin{array}{lrl}
		& \min	 	& c^T x + d^T \Psi(x) \\
 		& \mathrm{s.t.} & A x + B \Psi(x) = b, \\
		& 		& H x + K \Psi(x) \leq p, \\
		&		& x \in \dom\Psi,
\end{array}
\right. 
\tag{\mbox{$\mathrm{P_{PHO}}$}}
\label{eq:PHO}
\end{equation}
where $c\in \mb{R}^n$, $d\in \mb{R}^m$, $b\in \mb{R}^k$, $p\in \mb{R}^\ell$,
$A\in \mb{R}^{k\times n}$, $B\in \mb{R}^{k\times m}$, $H\in \mb{R}^{\ell \times n}$ and $K\in \mb{R}^{\ell \times m}$
are given constant vectors and matrices,
$\Psi \colon \mb{R}^n \rightarrow (\mb{R} \cup \infty)^m$ is defined by
$\Psi(\cdot) := (\psi_1(\cdot), \ldots, \psi_m(\cdot))^T$
where each function $\psi_i \colon \R^{n_i} \rightarrow \R$, $\sum_{i=1}^m n_i = n$ is nonnegative and positively homogeneous,
and $T$ denotes transpose.
Moreover, $\dom\Psi$ denotes the effective domain of $\Psi$, defined by $\dom\Psi := \{ x \in \R^n \mid \psi_i (x_i) < \infty, i = 1,\dots , m \}$
where $x_i \in \R^{n_i}$ is a disjoint subvector of $x$.
Problem~\eqref{eq:PHO} is not necessarily convex, and it includes~\eqref{eq:go} with~$\mathcal{X} = \{ x \in \R^n \mid h( b - Ax ) \leq \sigma \}$
since gauge functions are positively homogeneous.
Note that PHO can handle linear terms in its objective and constraint functions.
Here, we explicitly include $x \in \dom \Psi$ in the constraints of~\eqref{eq:PHO}.
This is because we want to consider more general PHO problems than the ones used in the previous work~\cite{YY2017},
where $\dom\Psi = \R^n$ is assumed.
Then we can adopt an indicator function of some cones as $\psi_i$.
We will later show that the same results as in~\cite{YY2017} can be obtained
even when $\dom\Psi \neq \R^n$.

When $n_i = 1$ and $\psi_i (x_i) = |x_i|$, \eqref{eq:PHO} is reduced to the absolute value programming problem
proposed by Mangasarian~\cite{Ma2007}.
The other examples of PHO problems are 
$p$-order cone optimization problems~\cite{AG2003,XY2000} with $p \in (0, \infty]$,
group Lasso-type problems~\cite{MGB2008,YL2006}, and sum of norms optimization problems~\cite{W2011}.

Yamanaka and Yamashita~\cite{YY2017} proposed a closed-form dual formulation of the PHO,
which they call the positively homogeneous dual, and showed that weak duality holds.
They also investigated the relation between the positively homogeneous dual and the Lagrangian dual
of~\eqref{eq:PHO}, and proved that those problems are equivalent under some conditions.
The result indicates that the Lagrangian dual of a PHO problem can be written in closed form even if it is nonconvex.
Although the PHO problem has the above nice features,
the theoretical analysis is still insufficient.
In particular, the paper~\cite{YY2017} does not discuss strong duality and primal recovery.

In this paper, we mainly study the following gauge optimization problem with possible linear functions:
\begin{equation}
\left.
\begin{array}{lrl}
		& \min	 	& c^T x + d^T \mathcal{G}(x) \\
 		& \mathrm{s.t.} & A x = b, \\
		& 		& H x + K \mathcal{G}(x) \leq p, \\
		&		& x \in \dom \, \mathcal{G},
\end{array}
\right. \tag{P}
\label{eq:MGO}
\end{equation}
where $c, d, b, p, A, H, K$ are the same as in~\eqref{eq:PHO}, 
and $\mathcal{G} \colon \mb{R}^n \rightarrow (\mb{R} \cup \infty)^m$ is defined by
$\mathcal{G}(\cdot) := (g_1(\cdot), \ldots, g_m(\cdot))^T$ with $g_i \colon \R^{n_i} \rightarrow \R$ as a gauge function for all $i$.
Note that there is no nonlinear term in the equality constraints,
and problem~\eqref{eq:MGO} is convex when all elements of $d$ and $K$ are nonnegative.
Problem~\eqref{eq:MGO} includes the convex GO problems considered in~\cite{ABD2017,F1987,FM2016,FMP2014},
and it is possible to explicitly handle linear terms.
In this paper, we call~\eqref{eq:MGO} the gauge optimization problem when it is clear from the context.

In particular, we are interested in theoretical properties of problem~\eqref{eq:MGO} and its dual.
We first define a dual problem of~\eqref{eq:MGO} as in~\cite{YY2017}, and then,
give conditions under which weak and strong dualities hold for problem~\eqref{eq:MGO} and its dual.
Moreover, we present necessary and sufficient optimality conditions for~\eqref{eq:MGO},
that does not use differentials of $g_i$ as in the Karush-Kuhn-Tucker (KKT) conditions.
We further give sufficient conditions under which we can obtain a primal solution
from a KKT point of the dual formulation.
Finally, we show that the theoretical results for problem~\eqref{eq:MGO}
can be extended to general convex optimization problems, by considering the so-called perspective functions.

The paper is organized as follows.
In Section~\ref{sec:pho}, we recall some important properties of~\eqref{eq:PHO} in~\cite{YY2017}.
We show that some of them hold even if $\dom\Psi \neq \R^n$.
Section~\ref{sec:go} presents the dual of  problem~\eqref{eq:MGO},
and gives some relations of~\eqref{eq:MGO} and its dual.
In particular, we show weak and strong duality results, the optimality conditions for the problem,
as well as the recovery of primal solutions by solving the dual problem.
In Section~\ref{sec:gen_convex}, we discuss how to extend the obtained results to general convex optimization problems.
Section~\ref{sec:conclusion} concludes the paper with final remarks and future works.

We use the following notations throughout the paper.
We denote by $\mb{R}_{++}$ the set of positive real numbers.
Let $x \in \mb{R}^n$ be an $n$-dimensional column vector, and $A \in \mb{R}^{n \times m}$ be a matrix
with dimension $n \times m$.
For two vectors $x$ and $y$, we denote the vector $(x^T, y^T)^T$ as $(x, y)^T$ for simplicity.
For a vector $x \in \mb{R}^n$, its $i$-th entry is denoted by~$x_i$.
Moreover, if $I \subseteq \{1, \dots, n\}$, then $x_I$
corresponds to the subvector of $x$ with entries $x_i$, $i \in I$.
The $n$-dimensional vector of ones is given by $e_n$, that is, $e_n := (1,\dots,1)^T \in \mb{R}^{n}$.
The identity matrix with dimension $n$ is $E_n \in \mb{R}^{n \times n}$.
For a matrix $A$, we write $A \succeq 0$ to denote $A$ is symmetric and positive semidefinite.
The notation $\# J$ denotes the number of elements of a set $J$.
We also denote by $\| \cdot \|$ the usual norm.
For a function $f$ and vectors $x$ and $y$, we denote the subdifferential of $f(x, y)$
with respect to $x$ as $\partial_x f(x, y)$.
The effective domain of a function $f$ is given by dom$f$.
The convex hull of a set $S$ is denoted by co$S$.
Finally, $\delta_S \colon \mb{R}^{n} \rightarrow \mb{R} \cup \{\infty \}$ is an indicator function of
a set $S \subseteq \mb R^n$ defined by
\[
\delta_S (x) :=
\left\{ \begin{array}{ll}
0		& \mathrm{if} \: x \in S, \\
\infty		& \mathrm{otherwise}.
\end{array} \right.
\]


\section{Positively homogeneous optimization problems and their duality}
\label{sec:pho}

In this section, we recall positively homogeneous optimization problems and their properties in~\cite{YY2017}.
The positively homogeneous and vector positively homogeneous functions are defined respectively as follows.

\begin{defn}(Positively homogeneous functions)\label{def:phf}
A function $\psi \colon \mb{R}^n \rightarrow \mb{R}\cup\{\infty\}$ is
positively homogeneous if 
$\psi( \lambda x ) = \lambda \psi(x)$ for all $x \in \mb{R}^n$ and $\lambda \in \mb{R}_{++}$.
\end{defn}

\begin{defn}(Vector positively homogeneous functions)\label{def:vphf}
A mapping $\Psi \colon \mb{R}^n \rightarrow (\mb{R} \cup \infty)^m$ is a vector positively homogeneous function
if it is defined as
\[
\Psi(x):=
\left[
\begin{array}{c}
\psi_1(x_{I_1})  \\
\vdots  \\
\psi_m(x_{I_m}) 
\end{array}
\right]
\label{asm:psi2_eq}
\]
with positively homogeneous functions $\psi_i \colon \mb{R}^{n_i} \rightarrow \mb{R}\cup\{\infty\}$,
$i= 1, \ldots ,m$, where $n = n_1 + \cdots + n_m$,
$I_i \subseteq \{1, \ldots, n \}$ is a set of indices satisfying
$I_i \cap I_j = \emptyset$ for all $i \neq j$, and $\#I_i = n_i$.
\end{defn}

The polar positively homogeneous function
associated to a positively homogeneous function~$\psi$ and similarly
the polar vector positively homogeneous function
associated to a vector positively homogeneous function~$\Psi$
are defined as follows.
Note that the paper~\cite{YY2017} calls such polar positively homogeneous functions dual functions.

\begin{defn}(Polar positively homogeneous functions)\label{def:phf_polar}
Let $\psi \colon \mb{R}^n \rightarrow \mb{R}\cup\{\infty\}$ be a positively homogeneous function.
Then, $\psi^\circ \colon \mb{R}^n \rightarrow \mb{R}\cup\{\infty\}$ defined by
\[
\psi^\circ(y) := \sup \{ x^T y \: | \: \psi(x) \leq 1 \}
\]
 is called the polar positively homogeneous function of $\psi$.
\end{defn}

Note that a polar positively homogeneous function is positively homogeneous and convex.
Moreover, when $\psi$ is a norm,  $\psi^\circ$ is the dual norm of $\psi$.

\begin{defn}(Polar vector positively homogeneous functions)\label{def:vphf_polar}
Let $\Psi \colon \mb{R}^n \rightarrow (\mb{R} \cup \infty)^m$ be a vector positively homogeneous function.
A function $\Psi^\circ \colon \mb{R}^n \rightarrow (\mb{R} \cup \infty)^m$ is
the polar vector positively homogeneous function  associated to $\Psi$ if $\Psi^\circ$ is given as
\[
\Psi^\circ(y)=
\left[
\begin{array}{c}
\psi_1^\circ(y_{I_1})  \\
\vdots  \\
\psi_m^\circ(y_{I_m}) 
\end{array}
\right]
\]
with the polar $\psi_i^\circ \colon \mb{R}^{n_i} \rightarrow \mb{R}\cup \{\infty\}$ of positively homogeneous function $\psi_i$,
$i=1,\ldots m$.
\end{defn}

Yamanaka and Yamashita~\cite{YY2017} further assumed
two conditions on positively homogeneous functions for the duality of PHO problems.
The first one is the nonnegativity of positively homogeneous functions.
The second one is that each component $\psi_i$ of $\Psi$ vanishes only at zero and $\dom \Psi = \R^n$.
For example, a usual norm satisfies both conditions, but neither
an indicator function for a cone nor the function $\psi_i (x_{I_i}) = \max\{0, x_{I_i}\}$
satisfies the second condition.
Therefore, the second one is rather restrictive.
Here, we suppose the following weaker assumptions.

\begin{asm}\label{asm:pos_nn}
Each positively homogeneous function $\psi_i$ in $\Psi$ is nonnegative, that is,
$\psi_i(x_{I_i}) \geq 0$ for all $x_{I_i} \in \mb{R}^{n_i}$.
\end{asm}

\begin{asm}\label{asm:pos_zero}
For each $i$, either of the following conditions holds:
\begin{itemize}
	\item[(a)] $d_i \geq 0$, $B_{ji} = 0$ and $K_{ji} \geq 0$ for all $j$,
	\item[(b)] $\dom\, \psi_i = \R^{n_i}$ and there exists $\hat x_{I_i}$ such that $\psi_i (\hat x_{I_i}) \neq 0$.
\end{itemize}
\end{asm}


Note that if problem~\eqref{eq:PHO} satisfies the first condition~(a) of Assumption~\ref{asm:pos_zero}
for all $i$ and all $\psi_i$ are gauge functions, then it becomes a convex gauge optimization problem~\eqref{eq:MGO}.
Since $\dom \Psi \neq \R^n$, we have to show the following lemma that corresponds to ~\cite[Proposition~2.1]{YY2017}.

\begin{lem}\label{lem:ineq}
Let $\Psi$ and $\Psi^\circ$ be a vector positively homogeneous function
and its polar, respectively. Then, we have
\[
\Psi^\circ(y) \geq 0.
\]
In addition, suppose that Assumption~\ref{asm:pos_nn} holds. Then,
\[
\Psi(x)^T \Psi^\circ(y) \geq x^T y
\]
holds for all $x \in \dom\Psi$ and $y \in \dom\Psi^\circ$.
\end{lem}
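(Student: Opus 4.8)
The plan is to reduce both assertions to their per-block scalar counterparts and then settle those directly from Definition~\ref{def:phf_polar}. Since the index sets $I_1,\dots,I_m$ partition $\{1,\dots,n\}$ with $\#I_i = n_i$, we have the decompositions $x^T y = \sum_{i=1}^m x_{I_i}^T y_{I_i}$ and $\Psi(x)^T\Psi^\circ(y) = \sum_{i=1}^m \psi_i(x_{I_i})\,\psi_i^\circ(y_{I_i})$, while $\Psi^\circ(y)\geq 0$ means $\psi_i^\circ(y_{I_i})\geq 0$ for each $i$. Hence it suffices to prove, for each fixed $i$, that $\psi_i^\circ(y_{I_i})\geq 0$ whenever $y_{I_i}\in\dom\psi_i^\circ$, and that $\psi_i(x_{I_i})\,\psi_i^\circ(y_{I_i})\geq x_{I_i}^T y_{I_i}$ for $x_{I_i}\in\dom\psi_i$, $y_{I_i}\in\dom\psi_i^\circ$.

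For the first inequality I would first record that positive homogeneity forces $\psi_i(0)=0$: taking $x=0$ in $\psi_i(\lambda x)=\lambda\psi_i(x)$ gives $\psi_i(0)=\lambda\psi_i(0)$ for every $\lambda>0$, so the finite value $\psi_i(0)$ must vanish. Consequently $0$ satisfies the constraint $\psi_i(\cdot)\leq 1$ appearing in Definition~\ref{def:phf_polar}, and therefore $\psi_i^\circ(y_{I_i})=\sup\{u^T y_{I_i}\mid\psi_i(u)\leq 1\}\geq 0^T y_{I_i}=0$. This yields $\Psi^\circ(y)\geq 0$, and note that Assumption~\ref{asm:pos_nn} is not required here.

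For the second inequality, fix $i$ and abbreviate $u:=x_{I_i}$ and $v:=y_{I_i}$; by Assumption~\ref{asm:pos_nn} we have $\psi_i(u)\geq 0$, so I would split into two cases. If $\psi_i(u)>0$, I normalize: positive homogeneity gives $\psi_i\bigl(u/\psi_i(u)\bigr)=\psi_i(u)/\psi_i(u)=1\leq 1$, so $u/\psi_i(u)$ is feasible in the definition of $\psi_i^\circ$, whence $\psi_i^\circ(v)\geq\bigl(u/\psi_i(u)\bigr)^T v$; multiplying through by $\psi_i(u)>0$ gives the claim. The delicate case is $\psi_i(u)=0$, in which the left-hand side collapses to $0$ and we must show $u^T v\leq 0$. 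Here I would use that $\lambda u$ is feasible for every $\lambda>0$, since $\psi_i(\lambda u)=\lambda\psi_i(u)=0\leq 1$, so $\psi_i^\circ(v)\geq\lambda\,u^T v$ for all $\lambda>0$; because $v\in\dom\psi_i^\circ$ means $\psi_i^\circ(v)<\infty$, letting $\lambda\to\infty$ would be impossible unless $u^T v\leq 0$.

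The main obstacle is precisely this last case. In~\cite{YY2017} each $\psi_i$ was assumed to vanish only at the origin, so $\psi_i(u)=0$ forced $u=0$ and the product inequality became trivial; under the weaker Assumptions~\ref{asm:pos_nn}--\ref{asm:pos_zero} adopted here, $\psi_i$ may vanish on a nontrivial cone, and it is the finiteness of $\psi_i^\circ$ on its effective domain, combined with the unbounded scaling $\lambda u$, that rescues the bound. Combining the two cases gives the per-block inequality $\psi_i(x_{I_i})\,\psi_i^\circ(y_{I_i})\geq x_{I_i}^T y_{I_i}$, and summing over $i$ delivers $\Psi(x)^T\Psi^\circ(y)\geq x^T y$.
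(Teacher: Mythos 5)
Your proof is correct and follows essentially the same route as the paper's: a blockwise reduction to $\psi_i(x_{I_i})\,\psi_i^\circ(y_{I_i}) \geq x_{I_i}^T y_{I_i}$, normalization by $\psi_i(u)$ when $\psi_i(u)>0$, and the unbounded-scaling argument combined with finiteness of $\psi_i^\circ$ on its domain when $\psi_i(u)=0$. The only difference is cosmetic: the paper delegates the first inequality $\Psi^\circ(y)\geq 0$ to \cite[Proposition~2.1]{YY2017}, whereas you prove it directly from feasibility of the origin (which tacitly uses $0\in\dom\psi_i$, harmless in the intended setting).
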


\begin{proof}
Since the first inequality has been shown in~\cite[Proposition 2.1]{YY2017}
by using Definitions~\ref{def:phf}, \ref{def:phf_polar} and \ref{def:vphf_polar},
we prove only the second inequality.
Clearly, it is enough to show that $\psi_i (x_{I_i}) \psi^\circ_i(y_{I_i}) \geq x_{I_i}^T y_{I_i}$.
For simplicity, we denote $\psi_i$ and $x_{I_i}$
as $\psi$ and $x$, respectively.

If $\psi(x) = 0$, then we can show that $x^T y \leq 0$ for all $y \in \dom\, \psi^\circ$ as follows.
Suppose to the contrary that there exists $y \in \dom\, \psi^\circ$ such that $x^T y > 0$,
and hence $t x^T y \rightarrow \infty$ as $t \rightarrow \infty$.
Moreover, since $\psi(tx) = t \psi(x) = 0$ for all $t>0$,
we have $\psi^\circ (y) \geq \sup \{ tx^T y ~|~ \psi(tx) \leq 1 \} = \infty$,
which contradicts the fact that $y \in \dom\, \psi^\circ$.
Consequently, we obtain $\psi(x) \psi^\circ(y) = 0 \geq x^T y$.

Next we consider the case where $\psi(x) > 0$.
Note that $x \in \dom \Psi$, and hence $\psi (x) < \infty$.
Let $z = x / \psi(x)$.
Since $\psi$ is positively homogeneous, we obtain
\[
\psi(z) = \psi \left( \dfrac{x}{\psi(x)} \right) = \dfrac{1}{\psi(x)}\psi(x) = 1.
\]
Therefore, we have
\begin{eqnarray*}
\psi^\circ (y) = \sup \{ \xi^T y ~|~ \psi(\xi) \leq 1 \} \geq z^T y = \dfrac{1}{\psi(x)} x^T y,
\end{eqnarray*}
which shows the second inequality.
\end{proof}

Yamanaka and Yamashita~\cite{YY2017} proposed the following dual of~\eqref{eq:PHO}:
\begin{equation}
\left.
\begin{array}{lrl}
		& \max	 	& b^T u - p^T v \\
 		& \mathrm{s.t.} & \Psi^\circ (A^T u - H^T v - c) + B^T u - K^T v \leq d, \\
		& 		& v \geq 0,
\end{array}
\right.
\tag{\mbox{$\mathrm{D_{PHO}}$}}
\label{eq:PHO_dual}
\end{equation}
where $(u, v) \in \mb{R}^k \times \mb{R}^\ell$.
Note that if $(u, v)$ is feasible for~\eqref{eq:PHO_dual}, then $A^T u - H^T v - c \in \dom \Psi^\circ$.
For problems \eqref{eq:PHO} and \eqref{eq:PHO_dual}, the following weak duality holds.

\begin{thm}\label{prop:wd}(Weak duality)
Suppose that Assumption~\ref{asm:pos_nn} holds.
Let $x \in \mb{R}^n$ and $(u, v) \in \mb{R}^k \times \mb{R}^\ell$ be feasible solutions
of \eqref{eq:PHO} and \eqref{eq:PHO_dual}, respectively.
Then, the following inequality holds:
\[
c^T x + d^T \Psi(x) \geq b^T u - p^T v.
\]
\end{thm}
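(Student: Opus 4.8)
The plan is to start from the dual objective $b^T u - p^T v$ and bound it above by the primal objective $c^T x + d^T \Psi(x)$, combining the two primal feasibility relations, the nonnegativity of $v$, and the generalized polar inequality supplied by Lemma~\ref{lem:ineq}. First I would exploit primal feasibility. From the equality constraint $A x + B \Psi(x) = b$ I get $b^T u = x^T A^T u + \Psi(x)^T B^T u$. From the inequality constraint $H x + K \Psi(x) \leq p$ together with the dual feasibility $v \geq 0$, I get $p^T v \geq x^T H^T v + \Psi(x)^T K^T v$. Subtracting these two relations yields
\[
b^T u - p^T v \leq x^T (A^T u - H^T v) + \Psi(x)^T (B^T u - K^T v).
\]

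Next I would introduce $y := A^T u - H^T v - c$ and regroup the right-hand side by adding and subtracting the linear term $c^T x$, so that it becomes
\[
b^T u - p^T v \leq c^T x + x^T y + \Psi(x)^T (B^T u - K^T v).
\]
The crucial step is then to apply Lemma~\ref{lem:ineq}. Since $x$ is primal feasible we have $x \in \dom\Psi$, and since $(u, v)$ is dual feasible the remark immediately preceding the theorem guarantees $y = A^T u - H^T v - c \in \dom\Psi^\circ$; the lemma therefore gives $x^T y \leq \Psi(x)^T \Psi^\circ(y)$. Substituting this bound and collecting the terms multiplied by $\Psi(x)$ produces
\[
b^T u - p^T v \leq c^T x + \Psi(x)^T \big( \Psi^\circ(y) + B^T u - K^T v \big).
\]

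Finally I would invoke the dual constraint $\Psi^\circ(y) + B^T u - K^T v \leq d$ together with Assumption~\ref{asm:pos_nn}, which ensures $\Psi(x) \geq 0$ componentwise, to replace the parenthesized vector by $d$ without reversing the inequality. This gives $b^T u - p^T v \leq c^T x + d^T \Psi(x)$, the desired claim. The main obstacle here is not any heavy computation but rather verifying that the hypotheses of Lemma~\ref{lem:ineq} are genuinely in force, namely that both membership conditions $x \in \dom\Psi$ and $y \in \dom\Psi^\circ$ hold so that the polar inequality applies, and that the nonnegativity from Assumption~\ref{asm:pos_nn} is available precisely at the final step to preserve the direction of the inequality when taking the inner product with $\Psi(x)$. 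The remaining manipulations are routine linear-algebraic bookkeeping.
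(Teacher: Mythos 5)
Your proposal is correct and follows essentially the same route as the paper: the paper's proof simply invokes Lemma~\ref{lem:ineq} and refers to the argument of Theorem~3.1 in the cited work of Yamanaka and Yamashita, which is exactly the chain you write out (combine primal feasibility with $v \geq 0$, insert the polar inequality $x^T y \leq \Psi(x)^T \Psi^\circ(y)$ for $y = A^T u - H^T v - c$, then use the dual constraint and $\Psi(x) \geq 0$). You also correctly verify the domain memberships $x \in \dom\Psi$ and $y \in \dom\Psi^\circ$ that make Lemma~\ref{lem:ineq} applicable, which is the only point requiring care.
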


\begin{proof}
Using Lemma~\ref{lem:ineq}, we can show weak duality as in the proof of~\cite[Theorem~3.1]{YY2017}.
\end{proof}

Next we show that the optimal values and solutions of problems \eqref{eq:PHO_dual} and
the Lagrangian dual of \eqref{eq:PHO} are the same.
Recall that the Lagrangian dual of \eqref{eq:PHO} is written~as
\begin{equation}
\sup_{\substack{u \\ v \geq 0}} \omega (u, v),
\tag{\mbox{$\mathrm{D_{PHO}^{\mathcal L}}$}}
\label{eq:PHO_Ldual}
\end{equation}
where $\omega \colon \R^k \times \R^\ell \rightarrow \R$ is defined by
\[
\omega(u, v) := \inf_{x \in \dom \Psi} \mathcal{L} (x, u, v)
\]
with the Lagrangian function $\mathcal{L}\colon \R^n \times \R^k \times \R^\ell \rightarrow \R$ of \eqref{eq:PHO}
given by
\[
\mathcal{L}(x, u, v) := c^T x + d^T \Psi(x) + u^T (b - Ax - B\Psi(x)) + v^T(p - Hx - K\Psi(x)).
\]
Note that we explicitly require $x \in \dom \Psi$ in the Lagrangian dual problem~\eqref{eq:PHO_Ldual}.
We prove the following key lemma for the equivalence between~\eqref{eq:PHO_dual} and \eqref{eq:PHO_Ldual}.
Note that it is an extension of~\cite[Lemma~4.1]{YY2017} to the case where $\dom\, \psi_i \neq \R^{n_i}$.

\begin{lem}\label{lem:lag_inf}
Let $\psi_i^\circ$ be the polar positively homogeneous functions of $\psi_i$ for $i=1,\dots, m$.
Suppose that Assumptions~\ref{asm:pos_nn} and \ref{asm:pos_zero} hold.
Assume also that $(\bar u, \bar v)$ with $\bar v \geq 0$
is not a feasible solution of problem~\eqref{eq:PHO_dual}.
Then $\omega(\bar u, \bar v) = -\infty$.
\end{lem}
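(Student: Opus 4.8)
The plan is to show directly that $\omega(\bar u,\bar v)=\inf_{x\in\dom\Psi}\mathcal L(x,\bar u,\bar v)=-\infty$ by regrouping the Lagrangian and exhibiting, along a \emph{single} violated constraint index, a one-parameter family of points on which $\mathcal L$ tends to $-\infty$. First I would collect the terms that are linear in $x$ and in $\Psi(x)$: writing $w:=A^T\bar u-H^T\bar v-c$ and the reduced coefficients $t_i:=d_i-(B^T\bar u)_i+(K^T\bar v)_i$ on each $\psi_i$, the Lagrangian becomes the constant $b^T\bar u-p^T\bar v$ plus the separable sum $\sum_{i=1}^m\big(t_i\,\psi_i(x_{I_i})-w_{I_i}^Tx_{I_i}\big)$. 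Since the blocks $x_{I_i}$ are disjoint, $\omega(\bar u,\bar v)$ equals that constant plus $\sum_i\inf_{x_{I_i}\in\dom\psi_i}\big(t_i\psi_i(x_{I_i})-w_{I_i}^Tx_{I_i}\big)$, and each block infimum is bounded above by its value at $x_{I_i}=0$, hence is never $+\infty$. Infeasibility of $(\bar u,\bar v)$ for \eqref{eq:PHO_dual} says exactly that for some $i$ one has $\psi_i^\circ(w_{I_i})>t_i$, allowing $\psi_i^\circ(w_{I_i})=\infty$ when $w_{I_i}\notin\dom\psi_i^\circ$. It therefore suffices to fix such an $i$, drop subscripts (writing $\psi,\psi^\circ,t,w$), and prove that this single block infimum is $-\infty$.

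In the regime $t\ge0$ this follows from the definition of the polar (Definition~\ref{def:phf_polar}) alone. Because $\psi^\circ(w)=\sup\{x^Tw\mid\psi(x)\le1\}>t$, there is $\tilde x$ with $\psi(\tilde x)\le1$ and $\tilde x^Tw>t$. Using $t\ge0$ and $0\le\psi(\tilde x)\le1$ gives $t\psi(\tilde x)\le t<\tilde x^Tw$, so $t\psi(\tilde x)-w^T\tilde x<0$; scaling by $\lambda>0$ and invoking positive homogeneity, $t\psi(\lambda\tilde x)-w^T(\lambda\tilde x)=\lambda\big(t\psi(\tilde x)-w^T\tilde x\big)\to-\infty$, with $\lambda\tilde x\in\dom\psi$ since $\psi(\lambda\tilde x)=\lambda\psi(\tilde x)<\infty$. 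This case covers, in particular, every index satisfying condition~(a) of Assumption~\ref{asm:pos_zero}, since there $t_i=d_i+(K^T\bar v)_i\ge0$ by $B_{ji}=0$, $K_{ji}\ge0$, $\bar v\ge0$ and $d_i\ge0$.

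The remaining regime $t<0$ can only occur under condition~(b) of Assumption~\ref{asm:pos_zero}, because condition~(a) forces $t\ge0$; thus $\dom\psi=\R^{n_i}$ and there is $\hat x$ with $\psi(\hat x)>0$. Here $t\psi(x)\le0$ for every $x$, so it suffices to drive $-w^Tx\to-\infty$ while keeping $x$ in $\dom\psi=\R^{n_i}$. If $w\neq0$ I would take $x=\lambda w$, giving $t\psi(\lambda w)-\lambda\|w\|^2=\lambda\big(t\psi(w)-\|w\|^2\big)\to-\infty$ since $t\psi(w)\le0$ and $\|w\|^2>0$; if $w=0$ I would take $x=\lambda\hat x$, giving $\lambda\,t\,\psi(\hat x)\to-\infty$ since $t<0$ and $\psi(\hat x)>0$. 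In both subcases $\lambda x\in\dom\psi$ because $\dom\psi=\R^{n_i}$.

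Combining the three cases, the block-$i$ infimum is $-\infty$ while every other block infimum is finite; fixing the remaining blocks at any point of their domains and letting $\lambda\to\infty$ then yields $\omega(\bar u,\bar v)=-\infty$. I expect the main obstacle to be the case $t<0$: unlike $t\ge0$, it cannot be handled through the defining supremum of $\psi^\circ$, and it is precisely here that the weaker Assumption~\ref{asm:pos_zero}(b) (full domain together with nonvanishing of $\psi_i$) is indispensable. The supporting point one must get right is the sign bookkeeping showing that condition~(a) always forces $t_i\ge0$, so that condition~(b) is guaranteed to be available exactly when $t_i<0$; a minor check is that $x_{I_i}=0\in\dom\psi_i$ so that the untouched blocks stay finite.
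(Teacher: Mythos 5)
Your proof is correct, and while it shares the paper's skeleton---isolate a single violated block $j$, build a ray in that block, and let positive homogeneity drive $\mathcal{L}\to-\infty$---your case decomposition is genuinely different and leaner. The paper trichotomizes on the \emph{value} of $\psi_j^\circ$ ($\in(0,\infty)$, $=\infty$, $=0$): in the finite case it runs an $\bar\varepsilon$-argument to normalize a near-maximizer to $\psi_j(\bar x_{I_j})=1$ exactly (including a contradiction subargument when the witness has $\psi_j=0$), handles $\psi_j^\circ=\infty$ by a separate sequence argument, and invokes Assumption~\ref{asm:pos_zero} only in the case $\psi_j^\circ=0$, where the subcase $\alpha_{I_j}\neq 0$ is excluded by contradiction. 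You instead dichotomize on the \emph{sign} of the reduced coefficient $t_j$: for $t_j\ge 0$, a single witness $\tilde x$ with $\psi(\tilde x)\le 1$ and $w^T\tilde x>t_j$ (which the sup definition supplies whether $\psi_j^\circ(w_{I_j})$ is finite or $+\infty$) already gives a strictly negative, positively homogeneous rate $t_j\psi(\tilde x)-w^T\tilde x<0$, so no normalization to the unit level set is needed and the paper's first two cases merge into one; for $t_j<0$, your bookkeeping that condition~(a) forces $t_j\ge 0$ guarantees condition~(b), and you obtain unboundedness directly along $w$ (or along $\hat x$ when $w=0$) without consulting the polar at all---this absorbs the paper's third case together with the $\beta_j<0$ portions of its first two. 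Two further points: your sign conventions ($w=A^T\bar u-H^T\bar v-c$, reduced costs $t=d-B^T\bar u+K^T\bar v$, constant $b^T\bar u-p^T\bar v$) are the ones actually consistent with \eqref{eq:PHO_dual}, quietly correcting sign slips in the paper's displayed Lagrangian and its $\alpha$; and you explicitly flag the step both proofs use tacitly, namely $\psi_i(0)=0$ for the untouched blocks (Definition~\ref{def:phf} alone only yields $\psi_i(0)\in\{0,\infty\}$; it is automatic under condition~(b), and the paper assumes it silently when evaluating $\Psi(\bar x)$ for $\bar x$ supported on block $j$). The trade-off is small: the paper's normalization produces an explicit unit-level witness, whereas your route is shorter, needs only the defining supremum of $\psi^\circ$ in the nonnegative regime, and makes the role of Assumption~\ref{asm:pos_zero} transparent---it is needed exactly when $t_j<0$.
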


\begin{proof}
Suppose that $(\bar u, \bar v)$ with $\bar v \geq 0$ is not a feasible solution of \eqref{eq:PHO_dual}.
Then, there exists an index $j$ such that
\begin{eqnarray}\label{eq:infeasible_ineq}
\psi^\circ_j (\alpha_{I_j}) > \beta_j,
\end{eqnarray}
where $\alpha := A^T \bar u - H^T \bar v - c \in \R^n$, and $\beta := d - B^T \bar u + K^T \bar v \in \R^m$.
Let $\bar x := (0, \dots, 0,$ $\bar x_{I_j},$ $0, \dots, 0)$.
Then we have $\Psi(\bar x) = (0, \dots, 0, \psi_j(\bar x_{I_j}), 0, \dots, 0)$ and
\begin{eqnarray}\label{eq:lem_Lag}
\mathcal{L}(\bar x, \bar u, \bar v) = - \alpha_j^T \bar x_{I_j} + \beta_j \psi_j (\bar x_{I_j}) + b^T \bar u + p^T \bar v.
\end{eqnarray}
Now we consider three cases: $\psi^\circ_j (\alpha_{I_j}) \in (0, \infty)$,
$\psi^\circ_j (\alpha_{I_j}) = \infty$, and $\psi^\circ_j (\alpha_{I_j}) = 0$.

First we study the case where $\psi^\circ_j (\alpha_{I_j}) \in (0, \infty)$.
Recall that $\psi^\circ_j (\alpha_{I_j})$ is defined as
\begin{eqnarray}\label{eq:lem_Lag2}
\psi^\circ_j (\alpha_{I_j}) = \sup_{x_{I_j}} \{ x_{I_j}^T \alpha_{I_j} ~|~ \psi_j (x_{I_j}) \leq 1 \}.
\end{eqnarray}
Therefore, for all $\varepsilon > 0$, there exists $\bar x_{I_j}(\varepsilon)$ such that
\begin{eqnarray}\label{eq:lem_Lag2-1}
\psi^\circ_j (\alpha_{I_j}) - \varepsilon \leq \alpha_{I_j}^T \bar x_{I_j}(\varepsilon), \quad \psi_j (\bar x_{I_j}(\varepsilon)) \leq 1.
\end{eqnarray}
Let $\bar \varepsilon$ be a scalar such that
$\bar \varepsilon := \min \{ \psi_j^\circ (\alpha_{I_j}) - \beta_j, \psi_j^\circ (\alpha_{I_j}) \} / 2 > 0$. 
Then $\psi^\circ_j (\alpha_{I_j}) > \bar \varepsilon > 0$.
Moreover, we show that there exists $\bar x_{I_j}$ such that
\begin{eqnarray}\label{eq:lem_Lag3}
\psi^\circ_j (\alpha_{I_j}) - \bar \varepsilon \leq \alpha_{I_j}^T \bar x_{I_j}, \quad \psi_j (\bar x_{I_j}) = 1.
\end{eqnarray}
Since $\psi^\circ_j (\alpha_{I_j}) > \bar \varepsilon$, the inequality~\eqref{eq:lem_Lag2-1} implies $\alpha_{I_j}^T \bar x_{I_j}(\bar \varepsilon) > 0$,
and hence $\bar x_{I_j}(\bar \varepsilon) \neq 0$.
If $\psi_j (\bar x_{I_j}(\bar \varepsilon)) \neq 0$,
then we set $\bar x_{I_j} = \bar x_{I_j}(\bar \varepsilon) / \psi_j (\bar x_{I_j}(\bar \varepsilon))$.
This vector $\bar x_{I_j}$ satisfies conditions~\eqref{eq:lem_Lag3} as shown below.
\[
\psi^\circ_j (\alpha_{I_j}) - \bar \varepsilon \leq \alpha_{I_j}^T \bar x_{I_j}(\bar \varepsilon) \leq \alpha_{I_j}^T \dfrac{\bar x_{I_j}(\bar \varepsilon)}{\psi_j (\bar x_{I_j}(\bar \varepsilon))} = \alpha_{I_j}^T \bar x_{I_j},
\]
\[
\psi_j (\bar x_{I_j}) = \psi_j \left( \dfrac{\bar x_{I_j}(\bar \varepsilon)}{\psi_j (\bar x_{I_j}(\bar \varepsilon))} \right) = \dfrac{1}{\psi_j (\bar x_{I_j}(\bar \varepsilon))} \psi_j (\bar x_{I_j}(\bar \varepsilon)) = 1,
\]
where the second inequality holds from Assumption~\ref{asm:pos_nn} and~\eqref{eq:lem_Lag2-1}.
If $\psi_j (\bar x_{I_j}(\bar \varepsilon)) = 0$, then $\psi_j (t \bar x_{I_j}(\bar \varepsilon)) = t \psi_j (\bar x_{I_j}(\bar \varepsilon)) = 0$
for all $t > 0$ because $\psi_j$ is positively homogeneous. From~\eqref{eq:lem_Lag2}, we have
$\psi^\circ_j (\alpha_{I_j}) \geq t \bar x_{I_j}(\bar \varepsilon)^T \alpha_{I_j}$.
Since $\alpha_{I_j}^T \bar x_{I_j}(\bar \varepsilon) > 0$,
we obtain $\psi^\circ_j (\alpha_{I_j}) \rightarrow \infty$ as $t \rightarrow \infty$,
which is a contradiction.
Therefore, there exists $\bar x_{I_j}$ such that \eqref{eq:lem_Lag3} holds.

We now denote $\bar t = (0, \dots, 0, t \bar x_{I_j}, 0, \dots, 0)$ for $t > 0$.
Then, we have from~\eqref{eq:lem_Lag}
\begin{eqnarray*}
\mathcal{L}(\bar t, \bar u, \bar v) &=& - t \alpha_{I_j}^T \bar x_{I_j} + \beta_j \psi_j (t \bar x_{I_j}) + b^T \bar u + p^T \bar v \\
					&\leq& - t ( \psi^\circ_j (\alpha_{I_j}) - \bar \varepsilon - \beta_j \psi_j (\bar x_{I_j}) ) + b^T \bar u + p^T \bar v \\
					&=& - t ( \psi^\circ_j (\alpha_{I_j}) - \bar \varepsilon - \beta_j ) + b^T \bar u + p^T \bar v,
\end{eqnarray*}
where the second inequality and the third equality hold from~\eqref{eq:lem_Lag3}.
Since $\bar \varepsilon \leq (\psi_j^\circ (\alpha_{I_j}) - \beta_j)/2$, we obtain
\begin{eqnarray*}
\mathcal{L}(\bar t, \bar u, \bar v)	&\leq & - t ( \psi^\circ_j (\alpha_{I_j}) - \bar \varepsilon - \beta_j ) + b^T \bar u + p^T \bar v\\
							&\leq& - t \left( \dfrac{\psi^\circ_j (\alpha_{I_j}) - \beta_j}{2} \right) + b^T \bar u + p^T \bar v,
\end{eqnarray*}
which concludes $\lim_{t \rightarrow \infty} \mathcal{L}(\bar t, \bar u, \bar v) = - \infty$.

Next we consider the case where $\psi^\circ_j (\alpha_{I_j}) = \infty$.
From~\eqref{eq:lem_Lag2}, there exists a sequence $\{ \bar x_{I_j}^k \} \subset \dom\, \psi_j$
such that $\psi_j ( \bar x_{I_j}^k ) \leq 1$ and $(\bar x_{I_j}^k)^T \alpha_{I_j} \rightarrow \infty$ as $k \rightarrow \infty$.
Let $\bar x^k = (0, \dots, 0, \bar x_{I_j}^k, 0, \dots, 0)$. Then, it follows from~\eqref{eq:lem_Lag} that
\[
\mathcal{L}(\bar x^k, \bar u, \bar v) = - \alpha_j^T \bar x_{I_j}^k + \beta_j \psi_j (\bar x_{I_j}^k) + b^T \bar u + p^T \bar v,
\]
and hence $\lim_{k \rightarrow \infty} \mathcal{L}(\bar x^k, \bar u, \bar v) = - \infty$.

We finally study the case where $\psi^\circ_j (\alpha_{I_j}) = 0$.
Note that $0 > \beta_j$ from~\eqref{eq:infeasible_ineq}.
When the first condition~(a) of Assumption~\ref{asm:pos_zero} holds,
it then follows from $\bar v \geq 0$ that $\beta_j = d_j - (B^T \bar u)_j + (K^T \bar v)_j \geq 0$, which is a contradiction.
Now, suppose that the second condition~(b) of Assumption~\ref{asm:pos_zero} holds.
If $\alpha_{I_j} \neq 0$, then there exists $\bar \varepsilon > 0$ such that
$1 \geq \psi_j (\bar \varepsilon \alpha_{I_j}) = \bar \varepsilon \psi_j (\alpha_{I_j})$.
Therefore we have
\[
\psi_j^\circ (\alpha_{I_j}) = \sup_{x_{I_j}} \{ x_{I_j}^T \alpha_{I_j} ~|~ \psi_j (x_{I_j}) \leq 1 \} \geq \bar \varepsilon \alpha_{I_j}^T \alpha_{I_j} > 0,
\]
which is a contradiction.
Now we consider the case where $\alpha_{I_j} = 0$.
From Assumption~\ref{asm:pos_zero}~(b), there exists $\hat x_{I_j}$ such that $\psi_j (\hat x_{I_j}) > 0$.
Let $\hat x(t) = (0, \dots, 0, t \hat x_{I_j}, 0, \dots, 0)$ with $t > 0$.
Then, it follows from~\eqref{eq:lem_Lag} that
\begin{eqnarray*}
\mathcal{L}(\hat x(t), \bar u, \bar v) &=& - \alpha_j^T \hat x_{I_j}(t) + \beta_j \psi_j (t \hat x_{I_j}) + b^T \bar u + p^T \bar v \\
							&=& t \beta_j \psi_j (\hat x_{I_j}) + b^T \bar u + p^T \bar v,
\end{eqnarray*}
and we conclude that $\lim_{t \rightarrow \infty} \mathcal{L}(\hat x(t), \bar u, \bar v) = - \infty$.

Consequently, $\omega(\bar u, \bar v)$ is unbounded from below.
\end{proof}

The next theorem shows that problems~\eqref{eq:PHO_dual} and \eqref{eq:PHO_Ldual}
are equivalent, which means that their optimal values and solutions of those problems are the same\footnote{A reviewer of this manuscript pointed out another
proof of the equivalence. We show the proof in Appendix~A.}.

\begin{thm}
\label{thm:sum}
Suppose that the Lagrangian dual problem~\eqref{eq:PHO_Ldual} has a feasible solution.
Suppose also that Assumptions~\ref{asm:pos_nn} and \ref{asm:pos_zero} hold.
Then, the optimal value and optimal solutions of problem~\eqref{eq:PHO_dual} are the same as those of~\eqref{eq:PHO_Ldual}.
\end{thm}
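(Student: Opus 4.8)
The plan is to compute the Lagrangian dual objective $\omega$ explicitly on the half-space $\{(u,v) : v \geq 0\}$ and match it term-by-term to \eqref{eq:PHO_dual}. First I would expand the Lagrangian and collect terms to obtain, for $x \in \dom\Psi$,
\[
\mathcal{L}(x,u,v) = b^T u - p^T v + \beta^T \Psi(x) - \alpha^T x,
\]
where $\alpha$ and $\beta$ are exactly the vectors appearing in Lemma~\ref{lem:lag_inf}, so that the constraint of \eqref{eq:PHO_dual} reads precisely $\Psi^\circ(\alpha) \leq \beta$ componentwise and $\omega(u,v) = b^T u - p^T v + \inf_{x \in \dom\Psi}[\beta^T\Psi(x) - \alpha^T x]$. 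Since positive homogeneity forces $\psi_i(0)=0$, we have $0 \in \dom\Psi$ and $\Psi(0)=0$; evaluating the bracket at $x=0$ shows its infimum is at most $0$, hence $\omega(u,v) \leq b^T u - p^T v$ for every $(u,v)$.

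Next I would show that equality holds whenever $(u,v)$ is feasible for \eqref{eq:PHO_dual}. Feasibility gives $v \geq 0$ and $\Psi^\circ(\alpha) \leq \beta$ componentwise; in particular each $\psi_i^\circ(\alpha_{I_i}) \leq \beta_i < \infty$, so $\alpha \in \dom\Psi^\circ$ and, by the first inequality of Lemma~\ref{lem:ineq}, $\Psi^\circ(\alpha) \geq 0$. Using $\Psi(x) \geq 0$ from Assumption~\ref{asm:pos_nn} together with $\beta \geq \Psi^\circ(\alpha) \geq 0$,
\[
\beta^T \Psi(x) \geq \Psi^\circ(\alpha)^T \Psi(x) = \Psi(x)^T \Psi^\circ(\alpha) \geq \alpha^T x,
\]
the last step being the second inequality of Lemma~\ref{lem:ineq}. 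Thus the bracket is nonnegative on $\dom\Psi$, and together with the upper bound this yields $\omega(u,v) = b^T u - p^T v$ exactly on the feasible region of \eqref{eq:PHO_dual}.

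For $(u,v)$ with $v \geq 0$ that is infeasible for \eqref{eq:PHO_dual}, Lemma~\ref{lem:lag_inf} already gives $\omega(u,v) = -\infty$. To conclude, note that the hypothesis that \eqref{eq:PHO_Ldual} has a feasible solution provides a point where $\omega > -\infty$, which by the contrapositive of Lemma~\ref{lem:lag_inf} is feasible for \eqref{eq:PHO_dual}; hence the feasible set of \eqref{eq:PHO_dual} is nonempty. Splitting $\{v \geq 0\}$ into the feasible and infeasible parts of \eqref{eq:PHO_dual}, the infeasible part contributes only $-\infty$ and can be discarded from the supremum, while on the feasible part $\omega$ coincides with $b^T u - p^T v$. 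Therefore $\sup_{v \geq 0}\omega(u,v)$ equals the optimal value of \eqref{eq:PHO_dual}, and any maximizer of either problem necessarily lies in the feasible region of \eqref{eq:PHO_dual} and maximizes the other, which gives equality of the solution sets as well.

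I expect the only delicate point to be the lower bound $\beta^T\Psi(x) \geq \alpha^T x$ in the feasible case, which rests on simultaneously using the componentwise inequality $\Psi^\circ(\alpha) \leq \beta$, the nonnegativity of $\Psi$ from Assumption~\ref{asm:pos_nn}, and both parts of Lemma~\ref{lem:ineq}; once Lemma~\ref{lem:lag_inf} is invoked to dispose of the infeasible directions, the remainder is routine bookkeeping about suprema and argmax sets.
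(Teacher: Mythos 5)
Your proposal is correct and takes essentially the same route as the paper, whose proof simply invokes Lemma~\ref{lem:lag_inf} together with the argument of \cite[Theorem~4.1]{YY2017} that you reconstruct in full: $\omega(u,v)$ coincides with the objective $b^T u - p^T v$ of \eqref{eq:PHO_dual} on its feasible set (the bracket $\inf_{x\in\dom\Psi}[\beta^T\Psi(x)-\alpha^T x]$ is nonnegative there by both parts of Lemma~\ref{lem:ineq} and vanishes in the limit), equals $-\infty$ at infeasible points with $v\geq 0$ by Lemma~\ref{lem:lag_inf}, and the feasibility hypothesis on \eqref{eq:PHO_Ldual} rules out the degenerate case, so the suprema and maximizer sets coincide. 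Two harmless remarks: your signs ($\alpha = A^T u - H^T v - c$ and the $-p^T v$ term) silently correct sign typos in the paper's displayed Lagrangian and in the definition of $\alpha$ in Lemma~\ref{lem:lag_inf}, and positive homogeneity alone only forces $\psi_i(0)\in\{0,\infty\}$ rather than $\psi_i(0)=0$ --- but in the event $0\notin\dom\Psi$ your evaluation at $x=0$ is replaced by letting $t\downarrow 0$ along a ray $t x^0$ with $x^0\in\dom\Psi$, since the bracket is positively homogeneous, so nothing breaks.
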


\begin{proof}
The result can be proved by using Lemma~\ref{lem:lag_inf}
as in the proof of \cite[Theorem~4.1]{YY2017}.
\end{proof}

The next proposition shows that the positively homogeneous dual of problem~\eqref{eq:PHO_dual}
is similar to~\eqref{eq:PHO}.

\begin{prop}\label{prop:shdual}
Suppose that problem~\eqref{eq:PHO_dual} is feasible.
Then, the positively homogeneous dual of \eqref{eq:PHO_dual} can be written as
\begin{equation}
\left.
\begin{array}{lrl}
		& \min	 	& c^T x + d^T y \\
 		& \mathrm{s.t.} & A x + B y = b, \\
		& 		& H x + K y \leq p, \\
		&		& \Psi^{\circ \circ}(x) \leq y,
\end{array}
\right. \tag{\mbox{$\mathrm{P'_{PHO}}$}}
\end{equation}
where $\Psi^{\circ \circ}$ denotes the polar of $\Psi^\circ$, i.e., $\Psi^{\circ \circ} = (\Psi^\circ)^\circ$.
\end{prop}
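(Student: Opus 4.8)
The plan is to recognize \eqref{eq:PHO_dual} as an instance of the positively homogeneous problem \eqref{eq:PHO} and then apply the dual construction that yields \eqref{eq:PHO_dual} once more, simplifying the outcome to $\mathrm{(P'_{PHO})}$. What makes this legitimate is the observation already recorded after Definition~\ref{def:vphf_polar}: each $\psi_i^\circ$ is positively homogeneous, so $\Psi^\circ$ is again a vector positively homogeneous function and may play the role of $\Psi$ in the template. I would use the feasibility hypothesis on \eqref{eq:PHO_dual} exactly to guarantee that this reformulation is a bona fide instance of \eqref{eq:PHO} to which the dual formula applies.

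First I would put \eqref{eq:PHO_dual} into the exact form of \eqref{eq:PHO}. I negate the objective to turn the maximization into a minimization, and I linearize the argument of $\Psi^\circ$ by introducing an auxiliary variable $s := A^T u - H^T v - c$. This rewrites \eqref{eq:PHO_dual} as
\[
\min_{(s,u,v)} \ -b^T u + p^T v \quad \text{s.t.}\quad \Psi^\circ(s) + B^T u - K^T v \le d,\quad s - A^T u + H^T v = -c,\quad -v \le 0,
\]
which is \eqref{eq:PHO} for the variable $\chi=(s,u,v)$ with positively homogeneous part $\hat\Psi(\chi):=\Psi^\circ(s)$. Reading off the data gives $\mathbf c=(0,-b,p)$ and $\mathbf d=0$ (no nonlinear objective term); equality matrix $\mathbf A=[\,E_n,\,-A^T,\,H^T\,]$ with $\mathbf B=0$ and right-hand side $\mathbf b=-c$; and, stacking the first and third constraints, inequality matrix $\mathbf H=\begin{bmatrix}0 & B^T & -K^T\\ 0 & 0 & -E_\ell\end{bmatrix}$, coefficient $\mathbf K=\begin{bmatrix}E_m\\ 0\end{bmatrix}$ and right-hand side $\mathbf p=(d,0)$.

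Next I would apply the formula \eqref{eq:PHO_dual} to this reformulation, denoting by $x$ the multiplier of the equality constraint, by $y\ge0$ the multiplier of the first inequality, and by $\mu\ge0$ the multiplier of $-v\le0$. Since $\mathbf B=0$ and $\mathbf d=0$, the resulting objective is $-c^Tx-d^Ty$ and the single nonlinear constraint reads $\hat\Psi^\circ(\mathbf A^T x-\mathbf H^T(y,\mu)-\mathbf c)\le y$, whose three blocks evaluate to $x$, $b-Ax-By$, and $Hx+Ky+\mu-p$. The crux is then to compute $\hat\Psi^\circ$: because $\hat\Psi$ depends only on the $s$-block, its polar is $+\infty$ unless the $u$- and $v$-blocks of its argument vanish, and there it equals $\Psi^{\circ\circ}$ of the $s$-block. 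Imposing finiteness therefore forces $Ax+By=b$ and $Hx+Ky+\mu=p$, the latter giving $Hx+Ky\le p$ since $\mu\ge0$, while the constraint itself reduces to $\Psi^{\circ\circ}(x)\le y$. Together with $\max(-c^Tx-d^Ty)=-\min(c^Tx+d^Ty)$ this is exactly $\mathrm{(P'_{PHO})}$.

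I expect the main obstacle to be the careful justification of this polar computation, namely that extending $\Psi^\circ$ to ignore the auxiliary blocks produces a polar that is improper off the zero set of those blocks, so that dual feasibility of the auxiliary multipliers is precisely what encodes the linear constraints $Ax+By=b$ and $Hx+Ky\le p$. The remaining work, tracking the max/min orientation introduced by negating the objective and verifying the block-wise evaluation of the dual argument, is routine bookkeeping.
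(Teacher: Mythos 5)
Your proposal is correct and follows the same overall route as the paper: rewrite \eqref{eq:PHO_dual} as a minimization, linearize the argument of $\Psi^\circ$ with an auxiliary variable, recognize the result as an instance of \eqref{eq:PHO}, and apply the dual template once more. The one genuine difference is how you handle the blocks of the lifted variable that $\Psi^\circ$ does not act on. The paper pads the positively homogeneous part with dummy components $\|u\|_2$ and $\|v\|_2$, so that the lifted map $\hat\Psi^\circ(\theta)=(\|u\|_2,\|v\|_2,\Psi^\circ(w))^T$ is a bona fide vector positively homogeneous function in the sense of Definition~\ref{def:vphf} (whose index sets must \emph{partition all} coordinates of the variable), and the equalities $Ax+By=b$ and $-Hx+Ky_1+y_2=p$ then emerge from the finite-valued dual constraints $\|\cdot\|_2\le 0$. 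You instead extend $\Psi^\circ$ by constancy in $(u,v)$ and let the impropriety of its polar ($+\infty$ off the zero set of those blocks) force the same equalities through dual feasibility. Your computation of the three argument blocks $x$, $b-Ax-By$, $Hx+Ky+\mu-p$ and the final bookkeeping are right, and the two mechanisms yield identical conclusions. Two caveats: first, as literally written, $\hat\Psi(\chi):=\Psi^\circ(s)$ leaves the $u$- and $v$-coordinates uncovered by every index set $I_i$, so Definitions~\ref{def:vphf} and~\ref{def:vphf_polar} do not directly apply --- a naive componentwise polar over the $s$-blocks alone would simply \emph{ignore} the $u$- and $v$-blocks and lose the linear constraints; to make your argument airtight you should fold $(u,v)$ into one component's domain (whereupon that component's polar acquires exactly the indicator you describe) or add dummy components as the paper does. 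Second, the feasibility hypothesis on \eqref{eq:PHO_dual} is not what legitimizes the reformulation (that is a purely structural matter); it serves only to keep the dualization from being vacuous. With the folding made explicit, your improper-polar variant is a clean alternative to the paper's dummy-norm device and arguably more transparent about where the equality constraints come from.
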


\begin{proof}
First, note that problem~\eqref{eq:PHO_dual} can be written as
\[
\left.
\begin{array}{lrl}
		& \min	 	& - b^T u + p^T v \\
 		& \mathrm{s.t.} & \Psi^\circ (w) + B^T u - K^T v \leq d, \\
		&			& w = A^T u - H^T v - c, \\
		& 		& - v \leq 0.
\end{array}
\right.
\]
This problem is further reformulated as
\begin{equation}
\label{eqn:pdwd}
\left.
\begin{array}{lrl}
		& \min	 	& \hat{c}^T \theta \\
 		& \mathrm{s.t.} & \hat{K} \hat{\Psi}^\circ(\theta) + \hat{H} \theta \leq \hat{p}, \\
		&			& \hat{A} \theta = c,
\end{array}
\right.
\end{equation}
where $\theta = (u, v, w)^T \in \mb{R}^{k+\ell+n}$,
$\hat{c} = (-b, p, 0)^T \in \mb{R}^{k+\ell+n}$, $\hat{p} = (d, 0)^T \in \mb{R}^{n+\ell}$,
$\hat{A} = (A^T, -H^T, -E_n) \in \mb{R}^{n \times (k+\ell+n)}$,
\[
\hat{K} = 
\left[
\begin{array}{ccc}
0	&	0	&	E_n  \\
0	&	0	&	0
\end{array}
\right] \in \mb{R}^{(n+\ell)\times(k+\ell+n)},
\hat{H} = 
\left[
\begin{array}{ccc}
B^T	&	-K^T	&	0  \\
0	&	-E_{\ell}	&	0
\end{array}
\right] \in \mb{R}^{(n+\ell)\times(k+\ell+n)},
\]
and $\hat{\Psi}^\circ$ is defined by $\hat{\Psi}^\circ(\theta) := (\|u \|_2, \|v \|_2, \Psi^\circ(w))^T$.
Note that $\|u \|_2$ and $\|v \|_2$ in $\hat{\Psi}^\circ$
are dummy functions, and they do not affect the primal problem.

Moreover, the positively homogeneous dual of (\ref{eqn:pdwd}) can be described as
\[
\left.
\begin{array}{lrl}
		& \max	 	& c^T x - \hat{p}^T y \\
 		& \mathrm{s.t.} & \hat{\Psi}^{\circ \circ}( \hat{A}^T x -\hat{H}^T y - \hat{c} ) - \hat{K}^T y \leq 0, \\
		& 		& y \geq 0.
\end{array}
\right.
\]
Let $y = (y_1, y_2)^T$ with $y_1 \in \mb{R}^{n}$ and $y_2 \in \mb{R}^{\ell}$. Then, the above problem can be rewritten as
\begin{equation}
\label{eqn:ddbd}
\left.
\begin{array}{lrl}
		& \min	 	& - c^T x + d^T y_1 \\
 		& \mathrm{s.t.} & \| A x -B y_1 + b \|_2 \leq 0, \\
		&			& \| -H x + K y_1 + y_2 - p \|_2 \leq 0, \\
		&			& \Psi^{\circ \circ}( - x ) - y_1 \leq 0, \\
		& 		& y \geq 0.
\end{array}
\right.
\end{equation}
The first two inequality constraints are equivalent to
\begin{eqnarray*}
- A x + B y_1 &=& b, \\
- H x + K y_1+y_2 &=& p.
\end{eqnarray*}
Since $y_2\geq 0$ in (\ref{eqn:ddbd}), the second equality is further reduced to $- H x + K y_1 \leq p$.
Consequently, we can reformulate (\ref{eqn:ddbd}) as
\[
\left.
\begin{array}{lrl}
		& \min	 	& - c^T x + d^T y_1 \\
 		& \mathrm{s.t.} & - A x + B y_1 = b , \\
		&			& - H x + K y_1 \leq p, \\
		&			& \Psi^{\circ \circ}( - x ) \leq y_1,
\end{array}
\right.
\]
which is precisely $\mathrm{(P'_{PHO})}$ by denoting $-x$ and $y_1$ as $x$ and $y$, respectively.
\end{proof}


\section{Gauge optimization problems and their duality}
\label{sec:go}

In this section, we discuss the following gauge optimization problem:
\begin{equation}
\left.
\begin{array}{lrl}
		& \min	 	& c^T x + d^T \mathcal{G}(x) \\
 		& \mathrm{s.t.} & A x = b, \\
		& 		& H x + K \mathcal{G}(x) \leq p, \\
		&		& x \in \dom \, \mathcal{G}.
\end{array}
\right. \tag{\mbox{$\mathrm{P}$}}
\end{equation}
We call $\mathcal{G}$ a vector gauge function
defined as $\mathcal{G} := (g_1(\cdot), \ldots, g_m(\cdot))^T$
with $g_i \colon \R^{n_i} \rightarrow \R$ as a gauge function for all $i$.
Since~\eqref{eq:MGO} is a special case of~\eqref{eq:PHO},
the PHO dual of~\eqref{eq:MGO} is written as follows:
\begin{equation}
\left.
\begin{array}{lrl}
		& \max	 	& b^T u - p^T v \\
 		& \mathrm{s.t.} & \mathcal{G}^\circ(A^T u - H^T v - c) - K^T v \leq d, \\
		& 		& v \geq 0,
\end{array}
\right. \tag{\mbox{$\mathrm{D}$}}
\label{eq:MGO_dual}
\end{equation}
where $\mathcal{G}^\circ$ is the polar function associated to $\mathcal{G}$.
Here, problem~\eqref{eq:MGO_dual} is a convex optimization problem
since each component $g^\circ_i$ of $\mathcal{G}^\circ$ is convex.

The next proposition is a corollary of Lemma~\ref{lem:ineq}.
Note that since a gauge function is nonnegative, Assumption~\ref{asm:pos_nn} automatically
holds.

\begin{prop}\label{prop:gauge_ineq}
Let $\mathcal{G}$ and $\mathcal{G}^\circ$ be a vector gauge function
and its polar, respectively.
Then, we have
\begin{eqnarray*}
\mathcal{G}^\circ(y) 	&\geq&  0,\\
\mathcal{G}(x)^T \mathcal{G}^\circ(y) &\geq&  x^T y
\end{eqnarray*}
for any $x \in \dom\, \mathcal{G}$ and $y \in \dom\, \mathcal{G}^\circ$.
\end{prop}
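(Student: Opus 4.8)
The plan is to derive both inequalities directly from Lemma~\ref{lem:ineq} by viewing $\mathcal{G}$ as a special vector positively homogeneous function. Recall that by definition a gauge function $g_i$ is convex, nonnegative, positively homogeneous, and satisfies $g_i(0)=0$. In particular each $g_i$ is positively homogeneous in the sense of Definition~\ref{def:phf}, so the vector gauge function $\mathcal{G} = (g_1, \ldots, g_m)^T$ is a vector positively homogeneous function in the sense of Definition~\ref{def:vphf}, and its polar $\mathcal{G}^\circ$ is the polar vector positively homogeneous function of Definition~\ref{def:vphf_polar}. Thus the hypotheses on $\Psi$ and $\Psi^\circ$ in Lemma~\ref{lem:ineq} are met with $\Psi = \mathcal{G}$ and $\Psi^\circ = \mathcal{G}^\circ$.

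First I would record the only substantive verification, namely that Assumption~\ref{asm:pos_nn} holds automatically here: since each $g_i$ is a gauge function, it is nonnegative, so $g_i(x_{I_i}) \geq 0$ for all $x_{I_i} \in \mb{R}^{n_i}$, which is precisely the requirement of Assumption~\ref{asm:pos_nn}. This is exactly the remark preceding the proposition, and it is the step that licenses the use of the second part of Lemma~\ref{lem:ineq}.

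With this in hand, the first inequality $\mathcal{G}^\circ(y) \geq 0$ for $y \in \dom\,\mathcal{G}^\circ$ is immediate from the first conclusion of Lemma~\ref{lem:ineq}, which requires no further assumption. The second inequality $\mathcal{G}(x)^T \mathcal{G}^\circ(y) \geq x^T y$ for $x \in \dom\,\mathcal{G}$ and $y \in \dom\,\mathcal{G}^\circ$ then follows from the second conclusion of Lemma~\ref{lem:ineq}, whose applicability is guaranteed by the nonnegativity just verified. No independent computation is needed, so I would simply invoke Lemma~\ref{lem:ineq} and cite the nonnegativity of gauge functions.

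I do not expect any genuine obstacle: the entire content of the proof is the observation that a vector gauge function is an instance of a vector positively homogeneous function satisfying Assumption~\ref{asm:pos_nn}, after which both inequalities are specializations of an already established lemma. The only point requiring care is to make explicit that the nonnegativity built into the definition of a gauge function is exactly what supplies the assumption that the second inequality of Lemma~\ref{lem:ineq} demands.
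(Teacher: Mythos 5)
Your proposal is correct and follows exactly the paper's route: the paper proves this proposition by citing Lemma~\ref{lem:ineq} directly, with the remark preceding the proposition noting that the nonnegativity of gauge functions makes Assumption~\ref{asm:pos_nn} hold automatically, which is precisely your argument. You simply spell out the specialization $\Psi = \mathcal{G}$ in more detail than the paper's one-line proof does.
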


\begin{proof}
The proof follows from Lemma~\ref{lem:ineq}.
\end{proof}

We have the weak duality theorem for problems~\eqref{eq:MGO} and~\eqref{eq:MGO_dual}, and
the equivalence between~\eqref{eq:MGO_dual} and the Lagrangian dual of~\eqref{eq:MGO}
from Proposition~\ref{prop:gauge_ineq} and Theorem~\ref{thm:sum}.
Throughout the paper, we denote the Lagrangian dual of~\eqref{eq:MGO} as $\mathrm{(D_\mathcal{L})}$.

\begin{cor}\label{cor:gauge_wd}(Weak duality)
For problems~\eqref{eq:MGO} and~\eqref{eq:MGO_dual}, the following inequality holds:
\[
c^T x + d^T \mathcal{G}(x) \geq b^T u - p^T v
\]
for all feasible points $x \in \mb{R}^n$ and $(u, v) \in \mb{R}^k \times \mb{R}^\ell$
of~\eqref{eq:MGO} and~\eqref{eq:MGO_dual}, respectively.
\end{cor}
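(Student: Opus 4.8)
The plan is to derive the inequality directly from the key polar inequality in Proposition~\ref{prop:gauge_ineq}, mirroring the weak-duality argument for the general positively homogeneous case. Since~\eqref{eq:MGO} is the special case of~\eqref{eq:PHO} obtained by taking $B = 0$ and $\Psi = \mathcal{G}$, and since every gauge function is nonnegative so that Assumption~\ref{asm:pos_nn} holds automatically, one could in principle just invoke Theorem~\ref{prop:wd}. I will nonetheless spell out the chain of inequalities explicitly, because it makes the role played by each feasibility condition transparent.

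First I would fix feasible points $x$ of~\eqref{eq:MGO} and $(u, v)$ of~\eqref{eq:MGO_dual}, and set $y := A^T u - H^T v - c$. Dual feasibility guarantees both that $y \in \dom\, \mathcal{G}^\circ$, so that Proposition~\ref{prop:gauge_ineq} is applicable, and the bound $\mathcal{G}^\circ(y) \leq d + K^T v$. Using the primal equality $Ax = b$ I would rewrite $b^T u = x^T A^T u$, and using the primal inequality $Hx + K\mathcal{G}(x) \leq p$ together with $v \geq 0$ I would bound $p^T v \geq x^T H^T v + \mathcal{G}(x)^T K^T v$.

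Substituting these two facts into the objective gap and collecting terms, the linear part collapses to $-x^T y$ and the gauge part to $\mathcal{G}(x)^T(d + K^T v)$, yielding
\[
c^T x + d^T \mathcal{G}(x) - (b^T u - p^T v) \geq \mathcal{G}(x)^T(d + K^T v) - x^T y.
\]
Because $\mathcal{G}(x) \geq 0$, the dual-feasibility bound $\mathcal{G}^\circ(y) \leq d + K^T v$ gives $\mathcal{G}(x)^T(d + K^T v) \geq \mathcal{G}(x)^T \mathcal{G}^\circ(y)$, and the second inequality of Proposition~\ref{prop:gauge_ineq} then gives $\mathcal{G}(x)^T \mathcal{G}^\circ(y) \geq x^T y$. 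Chaining these two steps drives the right-hand side down to $x^T y - x^T y = 0$, which is exactly the assertion.

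I do not anticipate a genuine obstacle, as the argument is simply a sequence of sign-controlled inequalities. The only points demanding care are the orientations: the lower bound on $p^T v$ relies on $v \geq 0$, the replacement of $d + K^T v$ by $\mathcal{G}^\circ(y)$ relies on the componentwise nonnegativity of $\mathcal{G}(x)$, and the applicability of Proposition~\ref{prop:gauge_ineq} relies on first confirming $y \in \dom\, \mathcal{G}^\circ$ from dual feasibility. Each of these is immediate from the stated hypotheses, so the verification is routine.
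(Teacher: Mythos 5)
Your proposal is correct and takes essentially the same approach as the paper: the paper proves Corollary~\ref{cor:gauge_wd} by simply invoking Proposition~\ref{prop:gauge_ineq} (with the chaining argument deferred to the weak duality proof of Theorem~\ref{prop:wd}, itself cited from~\cite{YY2017}), and your explicit sequence of inequalities is precisely that deferred argument written out, including the correct observations that $y = A^Tu - H^Tv - c \in \dom\,\mathcal{G}^\circ$ follows from dual feasibility and that Assumption~\ref{asm:pos_nn} holds automatically for gauge functions.
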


\begin{proof}
The proof directly follows from Proposition~\ref{prop:gauge_ineq}.
\end{proof}

\begin{cor}\label{cor:sum}
Suppose that the Lagrangian dual problem~$\mathrm{(D_\mathcal{L})}$ has a feasible solution.
Suppose also that Assumption~\ref{asm:pos_zero} holds.
Then, the optimal value and solutions of problem~\eqref{eq:MGO_dual}
are the same as~$\mathrm{(D_\mathcal{L})}$.
\end{cor}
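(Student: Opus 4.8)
The plan is to obtain Corollary~\ref{cor:sum} as a direct specialization of Theorem~\ref{thm:sum}, which already establishes the equivalence of the positively homogeneous dual \eqref{eq:PHO_dual} and the Lagrangian dual \eqref{eq:PHO_Ldual} for a general PHO problem. The key observation is that \eqref{eq:MGO} is precisely \eqref{eq:PHO} with $\Psi = \mathcal{G}$ (each $\psi_i = g_i$ a gauge function) and with the coefficient matrix of the nonlinear term in the equality constraint set to $B = 0$. Under this identification I would check that the hypotheses of Theorem~\ref{thm:sum} are met and that the problems \eqref{eq:MGO_dual} and $\mathrm{(D_\mathcal{L})}$ coincide with \eqref{eq:PHO_dual} and \eqref{eq:PHO_Ldual}, respectively; the conclusion then follows immediately.

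First I would dispose of the two standing assumptions. Assumption~\ref{asm:pos_nn} holds automatically because gauge functions are nonnegative by definition, as already noted before Proposition~\ref{prop:gauge_ineq}, while Assumption~\ref{asm:pos_zero} is assumed in the statement. I would also transfer the feasibility hypothesis directly: the assumption that $\mathrm{(D_\mathcal{L})}$ admits a feasible solution is exactly the feasibility of \eqref{eq:PHO_Ldual} once we specialize to the gauge case.

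Next I would confirm the exact correspondence of the dual problems. Setting $B = 0$ in the PHO dual \eqref{eq:PHO_dual} removes the term $B^T u$ from its constraint, so $\Psi^\circ(A^T u - H^T v - c) + B^T u - K^T v \leq d$ becomes $\mathcal{G}^\circ(A^T u - H^T v - c) - K^T v \leq d$, which is precisely the constraint of \eqref{eq:MGO_dual}. Likewise, substituting $\Psi = \mathcal{G}$ and $B = 0$ into the Lagrangian function of \eqref{eq:PHO} yields the Lagrangian of \eqref{eq:MGO}, so \eqref{eq:PHO_Ldual} reduces to $\mathrm{(D_\mathcal{L})}$. With both identifications in place, Theorem~\ref{thm:sum} applies verbatim and gives that the optimal values and the optimal solution sets of \eqref{eq:MGO_dual} and $\mathrm{(D_\mathcal{L})}$ agree.

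Because this is a corollary of an already-proved theorem, there is no substantive obstacle; the only point requiring care is the bookkeeping above—namely verifying that the matrix $B$ genuinely drops out in the gauge setting and that the polar $\mathcal{G}^\circ$ is exactly the restriction of $\Psi^\circ$ to this case—so that Theorem~\ref{thm:sum} can be invoked without any residual conditions.
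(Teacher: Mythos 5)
Your proposal is correct and follows exactly the paper's route: the paper proves Corollary~\ref{cor:sum} simply as a direct consequence of Theorem~\ref{thm:sum}, and your argument is that same specialization with the bookkeeping (setting $B=0$, taking $\Psi=\mathcal{G}$, noting Assumption~\ref{asm:pos_nn} holds automatically for gauge functions) spelled out explicitly. Nothing is missing; you have merely made explicit the identifications the paper leaves implicit.
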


\begin{proof}
The proof is a direct consequence of Theorem~\ref{thm:sum}.
\end{proof}

We now discuss the strong duality,
necessary and sufficient optimality conditions, and the primal recovery for problem~\eqref{eq:MGO}.
To this end, we need~\eqref{eq:MGO} to be convex.
Thus, from now on, we suppose the following assumption.

\begin{asm}
\label{asm:gauge_conv}
All elements of $d$ and $K$ of problem~\eqref{eq:MGO} are nonnegative.
\end{asm} 

Note that if Assumption~\ref{asm:gauge_conv} holds,
then Assumption~\ref{asm:pos_zero} holds for \eqref{eq:PHO} with $\Psi$ = $\mathcal{G}$.
Moreover, we assume the following condition on each function $g_i$.

\begin{asm}
\label{asm:gauge_lsc}
Each function $g_i$ of $\mathcal{G}$ is lower semi-continuous on $\R^{n_i}$.
\end{asm}

We now show that the dual of~\eqref{eq:MGO_dual} becomes~\eqref{eq:MGO} under
Assumptions~\ref{asm:gauge_conv} and \ref{asm:gauge_lsc}.

\begin{cor}
Suppose that Assumptions~\ref{asm:gauge_conv} and \ref{asm:gauge_lsc} hold.
Assume also that problem~\eqref{eq:MGO_dual} is feasible.
Then, the dual of~\eqref{eq:MGO_dual} is equivalent to~\eqref{eq:MGO}. 
\end{cor}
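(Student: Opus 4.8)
The plan is to recognize the dual \eqref{eq:MGO_dual} of \eqref{eq:MGO} as a special instance of the positively homogeneous dual \eqref{eq:PHO_dual} and then to invoke Proposition~\ref{prop:shdual}. Indeed, \eqref{eq:MGO_dual} coincides with \eqref{eq:PHO_dual} once we set $\Psi = \mathcal{G}$ and $B = 0$, the latter because the equality constraints of \eqref{eq:MGO} carry no nonlinear term. Since \eqref{eq:MGO_dual} is assumed feasible, Proposition~\ref{prop:shdual} applies and identifies the dual of \eqref{eq:MGO_dual} with $\mathrm{(P'_{PHO})}$ for these data, namely
\[
\min_{x, y}\; c^T x + d^T y \quad \mathrm{s.t.}\quad Ax = b,\; Hx + Ky \leq p,\; \mathcal{G}^{\circ \circ}(x) \leq y.
\]
It then remains to show that this problem is equivalent to \eqref{eq:MGO}.

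The key step is the bipolar identity $\mathcal{G}^{\circ \circ} = \mathcal{G}$. Working componentwise (as in Definition~\ref{def:vphf_polar}), it suffices to prove $g_i^{\circ \circ} = g_i$ for each $i$. Each $g_i$ is a gauge function and, by Assumption~\ref{asm:gauge_lsc}, lower semicontinuous, hence closed. I would establish the identity in two parts. The inequality $g_i^{\circ \circ} \leq g_i$ follows from Proposition~\ref{prop:gauge_ineq}: for any $y$ with $g_i^\circ(y) \leq 1$ we have $x^T y \leq g_i(x)\, g_i^\circ(y) \leq g_i(x)$, and taking the supremum over such $y$ gives $g_i^{\circ \circ}(x) \leq g_i(x)$. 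The reverse inequality is where lower semicontinuity is essential and where I would appeal to the bipolar theorem for closed gauges, which guarantees that the representation $g_i(x) = \sup\{ x^T y \mid g_i^\circ(y) \leq 1 \}$ recovers $g_i$ exactly rather than merely its closed hull. With this identity, the constraint $\mathcal{G}^{\circ \circ}(x) \leq y$ reads $\mathcal{G}(x) \leq y$; in particular, since $y$ ranges over $\mb{R}^m$, feasibility forces $\mathcal{G}(x) < \infty$, i.e.\ $x \in \dom\, \mathcal{G}$, recovering the domain constraint of \eqref{eq:MGO}.

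Finally I would eliminate the auxiliary variable $y$ by a monotonicity argument based on Assumption~\ref{asm:gauge_conv}. Because $d \geq 0$ and $K \geq 0$, increasing any component of $y$ can only increase the objective $c^T x + d^T y$ while tightening the inequality $Hx + Ky \leq p$; hence, for any feasible $x$, the optimal choice is the smallest admissible value $y = \mathcal{G}(x)$, which is feasible precisely because $x \in \dom\, \mathcal{G}$. Substituting $y = \mathcal{G}(x)$ reduces the displayed problem to
\[
\min_{x}\; c^T x + d^T \mathcal{G}(x) \quad \mathrm{s.t.}\quad Ax = b,\; Hx + K \mathcal{G}(x) \leq p,\; x \in \dom\, \mathcal{G},
\]
which is exactly \eqref{eq:MGO}. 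This shows that the two problems share the same optimal value and that their optimal solutions correspond via $y = \mathcal{G}(x)$.

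I expect the bipolar identity $\mathcal{G}^{\circ \circ} = \mathcal{G}$ to be the main obstacle: the easy direction $\mathcal{G}^{\circ \circ} \leq \mathcal{G}$ is immediate from Proposition~\ref{prop:gauge_ineq}, but the reverse direction is exactly the point at which Assumption~\ref{asm:gauge_lsc} is indispensable, since without closedness the bipolar returns only the lower semicontinuous hull of each $g_i$. The remaining steps—the specialization of Proposition~\ref{prop:shdual} and the monotone elimination of $y$—are routine once the identity is in hand.
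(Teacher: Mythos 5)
Your proposal is correct and takes essentially the same route as the paper: you specialize Proposition~\ref{prop:shdual} (with $B=0$) to get the constraint $\mathcal{G}^{\circ\circ}(x)\leq y$, invoke the bipolar identity $\mathcal{G}^{\circ\circ}=\mathcal{G}$ under Assumption~\ref{asm:gauge_lsc} (which the paper obtains by directly citing a known result of Friedlander, Mac\^edo and Pong rather than sketching the two inequalities as you do), and then use $d,K\geq 0$ from Assumption~\ref{asm:gauge_conv} to eliminate the auxiliary variable via $y=\mathcal{G}(x)$. Your monotone-substitution step is the same argument as the paper's two-sided comparison of the optimal values of $\mathrm{(P)}$ and $\mathrm{(P')}$, so there is no gap.
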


\begin{proof}
Since $g_i$ is a gauge function for all $i$ and satisfies Assumption~\ref{asm:gauge_lsc},
we have $\mathcal{G}^{\circ \circ}=\mathcal{G}$ by \cite[Theorem~15.1]{R1972}.
Then, it follows from Proposition~\ref{prop:shdual} that the dual of (D) becomes
\begin{equation}
\left.
\begin{array}{lrl}
		& \min	 	& c^T x + d^T y \\
 		& \mathrm{s.t.} & A x = b, \\
		& 		& H x + K y \leq p, \\
		&		& \mathcal{G}(x) \leq y.
\end{array}
\right. \tag{\mbox{$\mathrm{P'}$}}
\label{eq:MGO_prime}
\end{equation}

We show that the optimal value of~\eqref{eq:MGO} is the same as that of~\eqref{eq:MGO_prime}.
Let $x^*$ be an optimal solution of~\eqref{eq:MGO}. Then, $(\bar{x}, \bar{y}):=(x^*, \mathcal{G}(x^*))$ is
feasible for~\eqref{eq:MGO_prime}, and hence $c^T x^* +d^T \mathcal{G}(x^*) \ge 
c^T \bar{x} +d^T \bar{y}$. This shows that the optimal value of~\eqref{eq:MGO_prime}
is less than or equal to that of~\eqref{eq:MGO}.

Next, let $(\hat x, \hat y)$ be an optimal solution of~\eqref{eq:MGO_prime}.
From Assumptions~\ref{asm:gauge_conv} and the fact that $\mathcal{G}(\hat{x}) \leq \hat{y}$, we have
$c^T\hat{x} +d^T\mathcal{G}(\hat{x})\leq  c^T\hat{x} +d^T\hat{y}$ and
$H \hat x + K \mathcal{G}(\hat{x}) \leq H \hat x + K \hat y \leq p$.
Therefore, $(\hat x, \mathcal{G}(\hat x))$ is also optimal for~\eqref{eq:MGO_prime}.
Moreover, $\hat x$ is a feasible solution of~\eqref{eq:MGO}
and $c^T\hat{x} +d^T\mathcal{G}(\hat{x}) \le c^T\hat{x} +d^T\hat{y}$.
The result indicates that the optimal value of~\eqref{eq:MGO} is less than or equal to that of~\eqref{eq:MGO_prime}.

The above discussion shows that the optimal values of~\eqref{eq:MGO} and~\eqref{eq:MGO_prime} are the same.
Furthermore, if $x^*$ is optimal for~\eqref{eq:MGO}, then $(x^* , \mathcal{G}(x^*))$ is optimal for~\eqref{eq:MGO_prime}.
Conversely, if $(\hat x , \hat y)$ is an optimal solution
of~\eqref{eq:MGO_prime}, then $\hat x$ is optimal for~\eqref{eq:MGO}.
\end{proof}


\subsection{Strong duality}

We now focus on the strong duality between problems~\eqref{eq:MGO} and~\eqref{eq:MGO_dual}.
As seen below,
we require a certain constraint qualification for this purpose.

\begin{thm}\label{thm:gauge_sd}(Strong duality)
Suppose that Assumption~\ref{asm:gauge_conv} holds.
Suppose also that the Slater constraint qualification holds for~\eqref{eq:MGO}.
Then, the strong duality holds for problems~\eqref{eq:MGO} and \eqref{eq:MGO_dual}, i.e.,
if~\eqref{eq:MGO} has an optimal solution $x^*$, then \eqref{eq:MGO_dual} also has an optimal solution $(u^*, v^*)$
and the duality gap between~\eqref{eq:MGO} and \eqref{eq:MGO_dual} is zero, that is, $c^T x^* + d^T \mathcal{G}(x^*) = b^T u^* - p^T v^*$.
\end{thm}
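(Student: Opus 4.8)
The plan is to obtain the strong duality by combining the classical Lagrangian strong duality theorem for convex programs with the equivalence between the closed-form dual~\eqref{eq:MGO_dual} and the Lagrangian dual $\mathrm{(D_\mathcal{L})}$ already established in Corollary~\ref{cor:sum}. First I would verify that, under Assumption~\ref{asm:gauge_conv}, problem~\eqref{eq:MGO} is a proper convex optimization problem: the objective $c^T x + d^T \mathcal{G}(x)$ is convex since $d \geq 0$ and each $g_i$ is a convex gauge function, the constraint $Hx + K\mathcal{G}(x) \leq p$ is convex since $K \geq 0$, the equality $Ax = b$ is affine, and the implicit constraint $x \in \dom\,\mathcal{G}$ defines a convex set. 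Thus the standard convex duality framework applies to~\eqref{eq:MGO} and its Lagrangian dual.

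Second, I would invoke the classical strong duality theorem for convex programs under the Slater constraint qualification. Because Slater's condition holds and~\eqref{eq:MGO} admits an optimal solution $x^*$ with finite optimal value $c^T x^* + d^T \mathcal{G}(x^*)$, that theorem yields both the attainment of the Lagrangian dual at some $(u^*, v^*)$ with $v^* \geq 0$ and the absence of a duality gap between~\eqref{eq:MGO} and $\mathrm{(D_\mathcal{L})}$; that is, the optimal value of $\mathrm{(D_\mathcal{L})}$ equals $c^T x^* + d^T \mathcal{G}(x^*)$.

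Third, I would transport this conclusion to~\eqref{eq:MGO_dual}. Since $\mathrm{(D_\mathcal{L})}$ now has a feasible (indeed optimal) solution, and since Assumption~\ref{asm:pos_nn} holds automatically for gauge functions while Assumption~\ref{asm:pos_zero} follows from Assumption~\ref{asm:gauge_conv} (here $B = 0$, so condition~(a) is satisfied for every $i$), Corollary~\ref{cor:sum} guarantees that~\eqref{eq:MGO_dual} and $\mathrm{(D_\mathcal{L})}$ share the same optimal value and the same set of optimal solutions. Hence $(u^*, v^*)$ is optimal for~\eqref{eq:MGO_dual}, and its optimal value $b^T u^* - p^T v^*$ equals that of $\mathrm{(D_\mathcal{L})}$, which by the previous step equals $c^T x^* + d^T \mathcal{G}(x^*)$. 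This gives simultaneously the dual attainment and the zero-gap identity asserted in the theorem.

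The step I expect to be the main obstacle is the rigorous application of the Slater-based strong duality theorem when $\mathcal{G}$ is extended-real-valued and $\dom\,\mathcal{G}$ may be a proper subset of $\R^n$. One must make sure the implicit constraint $x \in \dom\,\mathcal{G}$ is absorbed into the effective domain of the proper convex objective and that the Slater point lies in the relative interior of that domain, so that the associated perturbation function is finite and continuous near the origin; only then does the classical theorem deliver dual attainment together with zero gap. Once~\eqref{eq:MGO} is correctly framed in this way, the remaining two steps are routine, and the passage to the closed-form dual is immediate from Corollary~\ref{cor:sum}.
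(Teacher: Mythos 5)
Your proposal is correct and follows essentially the same route as the paper: convexity of~\eqref{eq:MGO} under Assumption~\ref{asm:gauge_conv} plus Slater's condition gives classical strong duality with $\mathrm{(D_\mathcal{L})}$, and Corollary~\ref{cor:sum} (whose hypotheses you rightly verify, including that Assumption~\ref{asm:pos_zero}(a) holds since $B=0$ here) transfers attainment and zero gap to~\eqref{eq:MGO_dual}. Your closing caveat about the extended-real-valued $\mathcal{G}$ and the formulation of the Slater condition on $\dom\,\mathcal{G}$ is a reasonable point of care that the paper's own proof passes over silently.
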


\begin{proof}
Suppose that~\eqref{eq:MGO} has a solution.
Since~\eqref{eq:MGO} is convex from Assumptions~\ref{asm:gauge_conv} and the Slater constraint qualification holds for~\eqref{eq:MGO},
the strong duality holds between problems~\eqref{eq:MGO} and $\mathrm{(D_\mathcal{L})}$.
This means that $\mathrm{(D_\mathcal{L})}$ also has an optimal solution and 
the duality gap between~\eqref{eq:MGO} and $\mathrm{(D_\mathcal{L})}$ is zero.
It then follows from Corollary \ref{cor:sum} that 
the optimal value of~\eqref{eq:MGO_dual} is the same as that of~\eqref{eq:MGO}.
Moreover, since an optimal solution of~$\mathrm{(D_\mathcal{L})}$ is that of~\eqref{eq:MGO_dual},
\eqref{eq:MGO_dual} has an optimal solution.
\end{proof}


\subsection{Optimality conditions}

The most well-known optimality conditions in the optimization literature are Karush-Kuhn-Tucker (KKT) conditions.
These KKT conditions use gradients and/or subgradients of the functions involved in the problem.
We now present alternative optimality conditions that do not require gradient information.

We first give sufficient optimality conditions for problems~\eqref{eq:MGO} and \eqref{eq:MGO_dual}.
Note that we do not assume the Slater constraint qualification and
Assumption~\ref{asm:gauge_conv} here.

\begin{thm}\label{thm:gauge_suf_op}(Sufficient optimality conditions)
Points $x^*$ and $(u^*, v^*)$ are optimal for~\eqref{eq:MGO} and \eqref{eq:MGO_dual}, respectively,
if the following conditions hold:
\begin{enumerate}
	\item[$\mathrm{(i)}$] $H x^* + K \mathcal{G}(x^*) \leq p$, $A x^* = b$, $x^* \in \dom\, \mathcal{G},$ \hspace{41mm} (primal feasibility)
	\item[$\mathrm{(ii)}$] $\mathcal{G}^\circ(A^T u^* - H^T v^* - c) - K^T v^* \leq d$, $v^* \geq 0,$ \hspace{42mm} (dual feasibility)
	\item[$\mathrm{(iii)}$] $\left[ d + K^T v^* - \mathcal{G}^\circ(A^T u^* - H^T v^* - c) \right]_i \: g_i(x^*_{I_i}) = 0$, $i=1, \dots, m,$ \hspace{2mm} (complementarity)
	\item[$\mathrm{(iv)}$] $\left[ p - H x^* - K \mathcal{G}(x^*) \right]_i \: v^*_i  = 0$, $i=1,\dots, m,$ \hspace{39mm} (complementarity)
	\item[$\mathrm{(v)}$] $\mathcal{G}^\circ(A^T u^* - H^T v^* - c)^T \mathcal{G}(x^*) = (A^T u^* - H^T v^* - c)^T x^*.$ \hspace{29mm} (alignment)
\end{enumerate}
\end{thm}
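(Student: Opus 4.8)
The plan is to deduce optimality from weak duality rather than from any KKT-style argument. By Corollary~\ref{cor:gauge_wd}, every primal-feasible $x$ satisfies $c^T x + d^T \mathcal{G}(x) \geq b^T u^* - p^T v^*$ (since $(u^*,v^*)$ is dual feasible by condition~$(\mathrm{ii})$; note that $B=0$ for problem~\eqref{eq:MGO}, so the $B^T u^*$ term there vanishes and~$(\mathrm{ii})$ is exactly feasibility for~\eqref{eq:MGO_dual}), and every dual-feasible $(u,v)$ satisfies $b^T u - p^T v \leq c^T x^* + d^T \mathcal{G}(x^*)$ (since $x^*$ is primal feasible by condition~$(\mathrm{i})$). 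Consequently, it suffices to establish the single scalar identity
\[
c^T x^* + d^T \mathcal{G}(x^*) = b^T u^* - p^T v^*,
\]
because then $x^*$ attains the lower bound that weak duality imposes on all primal-feasible points, and $(u^*,v^*)$ attains the corresponding upper bound on all dual-feasible points, so both are optimal for their respective problems.

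To prove the identity I would start from the dual objective $b^T u^* - p^T v^*$ and rewrite it using the remaining conditions. First, primal feasibility $A x^* = b$ in~$(\mathrm{i})$ gives $b^T u^* = (x^*)^T A^T u^*$. Next, summing the componentwise complementarity~$(\mathrm{iv})$ over $i$ yields $(p - H x^* - K \mathcal{G}(x^*))^T v^* = 0$, hence $p^T v^* = (x^*)^T H^T v^* + \mathcal{G}(x^*)^T K^T v^*$. Substituting both and abbreviating $\alpha := A^T u^* - H^T v^* - c$, the dual objective collapses to
\[
b^T u^* - p^T v^* = c^T x^* + (x^*)^T \alpha - \mathcal{G}(x^*)^T K^T v^*.
\]

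The two structural steps come last. The alignment condition~$(\mathrm{v})$ is precisely what upgrades the inequality $\mathcal{G}(x^*)^T \mathcal{G}^\circ(\alpha) \geq (x^*)^T \alpha$ of Proposition~\ref{prop:gauge_ineq} to the equality $(x^*)^T \alpha = \mathcal{G}(x^*)^T \mathcal{G}^\circ(\alpha)$, which I would substitute for $(x^*)^T \alpha$. Finally, rearranging the componentwise complementarity~$(\mathrm{iii})$ as $g_i(x^*_{I_i})\,[\mathcal{G}^\circ(\alpha) - K^T v^*]_i = g_i(x^*_{I_i})\, d_i$ and summing over $i$ gives $\mathcal{G}(x^*)^T [\mathcal{G}^\circ(\alpha) - K^T v^*] = d^T \mathcal{G}(x^*)$, which turns the two remaining terms into $d^T \mathcal{G}(x^*)$ and closes the identity.

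I do not expect a serious obstacle, since the argument is a complementary-slackness computation rather than an analytic one; the only points demanding care are the bookkeeping of transposes, the fact that~$(\mathrm{iii})$ and~$(\mathrm{iv})$ hold componentwise and must be summed before being read as inner products, and the recognition that condition~$(\mathrm{v})$ plays the decisive role of promoting the polar inequality of Proposition~\ref{prop:gauge_ineq} to equality. It is worth emphasizing that no constraint qualification and no convexity assumption are invoked, which is consistent with the statement asserting sufficiency in full generality.
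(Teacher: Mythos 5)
Your proposal is correct and takes essentially the same route as the paper's proof: both use primal feasibility $Ax^*=b$, the summed complementarity conditions (iii)--(iv), and the alignment condition (v) to show that the primal and dual objective values coincide, and then invoke weak duality (Corollary~\ref{cor:gauge_wd}) to conclude optimality of both points. The only differences are cosmetic---you start from the dual objective while the paper starts from $0$ expressed as the sum of the complementarity products---and your side observations (that $B=0$ for problem~(P) so condition~(ii) is exactly dual feasibility, and that no convexity assumption or constraint qualification is needed) are accurate.
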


\begin{proof}
From the complementarity conditions (iii) and (iv), we obtain
\begin{eqnarray*}
0 &= & \left[ d + K^T v^* - \mathcal{G}^\circ(A^T u^* - H^T v^* - c)\right]^T \mathcal{G}(x^*) + \left[ p - H x^* - K \mathcal{G}(x^*) \right]^T v^* \\
	& = &	d^T \mathcal{G}(x^*) - \mathcal{G}^\circ(A^T u^* - H^T v^* - c)^T \mathcal{G}(x^*) + p^T v^* - (Hx^*)^T v^*.
\end{eqnarray*}
It then follows from the alignment condition that we have
\begin{eqnarray*}
\lefteqn{d^T \mathcal{G}(x^*) - \mathcal{G}^\circ(A^T u^* - H^T v^* - c)^T \mathcal{G}(x^*) + p^T v^* - (Hx^*)^T v^*}  \\
	&=& d^T \mathcal{G}(x^*) - (A^T u^* - H^T v^* - c)^T x^* + p^T v^* - (Hx^*)^T v^*\\
	&=& c^T x^* + d^T \mathcal{G}(x^*) - b^T u^* + p^T v^*,
\end{eqnarray*}
which indicates that the objective function values of the primal and the dual problems are the same
for the feasible points $x^*$ and $(u^*, v^*)$. From the weak duality theorem, 
$x^*$ and $(u^*, v^*)$ are optimal for~\eqref{eq:MGO} and \eqref{eq:MGO_dual}, respectively.
\end{proof}

Note that condition (v) in Theorem~\ref{thm:gauge_suf_op}, called the alignment condition,
is not standard, and seems to be strange at first glance.
This is actually used in the previous work~\cite{ABD2017} about gauge duality,
which is different from the duality considered here. 
Moreover, as it can be seen below, the  alignment condition is one of the necessary conditions for optimality.

When the Slater constraint qualification for problem~\eqref{eq:MGO} and Assumption~\ref{asm:gauge_conv} hold,
the sufficient optimality conditions in Theorem~\ref{thm:gauge_suf_op} become necessary.

\begin{thm}\label{thm:gauge_nec_op}(Necessary conditions for optimality)
Suppose that Assumption~\ref{asm:gauge_conv} holds.
Suppose also that the Slater constraint qualification holds for~\eqref{eq:MGO}.
Let $x^*$ and $(u^*, v^*)$ be optimal solutions of~\eqref{eq:MGO} and \eqref{eq:MGO_dual}, respectively.
Then conditions (i)--(v) in Theorem~\ref{thm:gauge_suf_op} hold. 
\end{thm}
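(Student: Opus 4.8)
The plan is to combine the strong duality theorem with a ``sum of nonnegative terms equals zero'' argument. Conditions (i) and (ii) are immediate: an optimal solution is in particular feasible for its problem. For the remaining three conditions, the idea is to exhibit three nonnegative quantities—one attached to each of (iii), (iv), (v)—whose sum equals precisely the duality gap between~\eqref{eq:MGO} and~\eqref{eq:MGO_dual}, and then to invoke Theorem~\ref{thm:gauge_sd} to force that gap to vanish.

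First I would set $\alpha := A^T u^* - H^T v^* - c$. Dual feasibility guarantees $\alpha \in \dom\,\mathcal{G}^\circ$, while primal feasibility gives $x^* \in \dom\,\mathcal{G}$, so Proposition~\ref{prop:gauge_ineq} is applicable. Define
\[
S_1 := [d + K^T v^* - \mathcal{G}^\circ(\alpha)]^T \mathcal{G}(x^*), \quad S_2 := [p - Hx^* - K\mathcal{G}(x^*)]^T v^*, \quad S_3 := \mathcal{G}^\circ(\alpha)^T \mathcal{G}(x^*) - \alpha^T x^*.
\]
Each of these is nonnegative: $S_1$ is a sum of products of nonnegative factors, since the bracket is nonnegative by dual feasibility (ii) (recall that $B = 0$ for~\eqref{eq:MGO}) and $g_i(x^*_{I_i}) \geq 0$ because each $g_i$ is a gauge; $S_2$ is likewise a sum of nonnegative products by primal feasibility (i) together with $v^* \geq 0$; and $S_3 \geq 0$ is exactly the inequality $\mathcal{G}(x^*)^T \mathcal{G}^\circ(\alpha) \geq x^{*T}\alpha$ furnished by Proposition~\ref{prop:gauge_ineq}.

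Next I would expand $S_1 + S_2 + S_3$. The cross terms $v^{*T} K \mathcal{G}(x^*)$ and $\mathcal{G}^\circ(\alpha)^T \mathcal{G}(x^*)$ cancel, and after substituting $\alpha^T x^* = u^{*T} A x^* - v^{*T} H x^* - c^T x^*$ and using the equality constraint $A x^* = b$, the sum collapses to
\[
S_1 + S_2 + S_3 = c^T x^* + d^T \mathcal{G}(x^*) - (b^T u^* - p^T v^*),
\]
which is the duality gap between the two problems. By Theorem~\ref{thm:gauge_sd}, Assumption~\ref{asm:gauge_conv} and the Slater condition guarantee this gap is zero, so $S_1 + S_2 + S_3 = 0$.

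Finally, since $S_1, S_2, S_3$ are each nonnegative and sum to zero, each must vanish. The equality $S_3 = 0$ is exactly condition (v). Because $S_1$ and $S_2$ are sums of nonnegative terms, $S_1 = 0$ and $S_2 = 0$ force every summand to vanish individually, yielding the entrywise complementarity conditions (iii) and (iv). I expect the main obstacle to be purely organizational: carefully tracking the cancellations in the collapse computation and keeping in mind that $B = 0$ so that the dual-feasibility bracket matches the one appearing in (iii). The conceptual content is carried entirely by the zero-gap conclusion and the three nonnegativity facts.
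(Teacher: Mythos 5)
Your proposal is correct and is essentially the paper's own proof: the paper establishes the identity $0 = c^Tx^* + d^T\mathcal{G}(x^*) - b^Tu^* + p^Tv^*$ via strong duality and then runs exactly your decomposition as a chain of inequalities, namely $0 = S_1 + S_2 + S_3 \geq S_1 + S_2 \geq 0$ (the first inequality being Proposition~\ref{prop:gauge_ineq}, the second being feasibility), forcing equality throughout and hence (iii)--(v). Your only cosmetic difference is naming the three nonnegative quantities explicitly and summing them, rather than sandwiching the zero gap between inequalities.
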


\begin{proof}
Since $x^*$ and $(u^*, v^*)$ are optimal solutions of~\eqref{eq:MGO} and \eqref{eq:MGO_dual}, respectively,
the feasibility conditions (i) and (ii) clearly hold. Moreover, since strong duality holds
for $x^*$ and $(u^*, v^*)$ under the assumptions, we have
\begin{eqnarray*}
0 &=& c^T x^* + d^T \mathcal{G}(x^*) - b^T u^* + p^T v^* \\
  &=& d^T \mathcal{G}(x^*) - (A^T u^* - H^T v^* - c)^T x^* + p^T v^* - (Hx^*)^T v^* \\
 &\geq & d^T \mathcal{G}(x^*) - \mathcal{G}^\circ(A^T u^* - H^T v^* - c)^T \mathcal{G}(x^*) + p^T v^* - (Hx^*)^T v^* \\
 &= & \left[ d + K^T v^* - \mathcal{G}^\circ(A^T u^* - H^T v^* - c)\right]^T \mathcal{G}(x^*) + \left[ p - H x^* - K \mathcal{G}(x^*) \right]^T v^* \\ 
& \geq & 0,
\end{eqnarray*}
where the second equality follows from the fact that $Ax^*=b$, the third inequality follows from 
Proposition~\ref{prop:gauge_ineq}, and the last inequality follows from (i) and (ii).
Thus, the above inequalities hold with equalities, and hence we obtain
conditions (iii), (iv) and (v).
\end{proof}


\subsection{Primal recovery}

Let us now discuss about the recovery of a primal optimal solution from a KKT point of the dual problem~\eqref{eq:MGO_dual}.
For simplicity, we denote $\Phi(u, v) := \mathcal{G}^\circ(A^T u - H^T v - c)$ and
$\phi_i(u, v) := g_i^\circ(A_{I_i}^T u - H_{I_i}^T v - c_{I_i}), i=1,\ldots, m$.
Then, the KKT conditions of~\eqref{eq:MGO_dual} can be described as 
\begin{eqnarray}
p + V^T \lambda - K \lambda - \mu = 0, & \: & V \in \partial_v \Phi(u^*, v^*), \label{eq:kkt1} \\
- b + U^T \lambda = 0, &  & U \in \partial_u \Phi(u^*, v^*),  \label{eq:kkt2} \\
d - \Phi(u^*, v^*) - K^T v^* \geq 0, \lambda \geq 0, &  & \lambda^T (d - \Phi(u^*, v^*) - K^T v^*) = 0, \label{eq:kkt3} \\
v^* \geq 0, \mu \geq 0, &  & v^{*T} \mu = 0, \label{eq:kkt4}
\end{eqnarray}
where $\lambda \in \mb{R}^m$ and $\mu \in \mb{R}^{\ell}$ are Lagrangian multipliers.
Let  $A_i=A_{I_i}$ and $H_i=H_{I_i}$ for all $i=1, \ldots, m$ in the subsequent discussion. Moreover,
we divide matrices $U$ and $V$  as 
\[
U=
\left(
\begin{array}{c}
U_1, \\
\vdots \\
U_m
\end{array}
\right), \;\;
V=
\left(
\begin{array}{c}
V_1, \\
\vdots \\
V_m
\end{array}
\right),
\]
where  $U_i \in \mb{R}^{1 \times k}$ and $V_i \in \mb{R}^{1 \times \ell}$ for all $i=1, \ldots, m$.

We now give the concrete formulae for the subdifferentials $\partial_v \Phi$ and $\partial_u \Phi$.
First, for given  $u \in \mb{R}^k$ and $v \in \mb{R}^{\ell}$,
let us denote $X_i(u, v)$ as the set of optimal solutions of the following problem:
\begin{equation}
\left.
\begin{array}{lrl}
		& \displaystyle{\sup_{x_{I_i}}}	 	& u^T A_i x_{I_i} - v^T H_i x_{I_i} - c_{I_i}^T x_{I_i} \\
 		& \mathrm{s.t.} & g_i(x_{I_i}) \leq 1.
\end{array}
\right.\tag{\mbox{$\mathrm{P}_i$}}
\label{eq:prob_lem}
\end{equation}

Moreover, we assume the following condition to show key properties of~$X_i (u, v)$.

\begin{asm}\label{asm:gauge_zero}
For all $i$, $g_i$ vanishes only at $0$, that is, $g_i(\bar{x}_{I_i})=0$ if and only if $\bar{x}_{I_i} = 0$.
\end{asm}

\begin{lem}\label{lem:gauge_sol_set}
Suppose that  Assumptions~\ref{asm:gauge_lsc} and \ref{asm:gauge_zero} hold.  
Then, $X_i (u, v)$ is nonempty, convex and compact for all $u \in \mb{R}^k$ and $v \in \mb{R}^{\ell}$.
\end{lem}

\begin{proof}
The feasible region of~\eqref{eq:prob_lem} is nonempty since $g_i$ is a gauge function, and
$x_{I_i} = 0$ is a feasible solution of problem~\eqref{eq:prob_lem}.
In addition, the feasible region is convex and closed because each function $g_i$ is convex and closed
from Assumption~\ref{asm:gauge_lsc}.
Moreover, Assumption~\ref{asm:gauge_zero} implies that the feasible region is bounded.
To see this, let $B_i:=\{ z \in \mb{R}^{n_i}\;|\; \|z\|=1\}$ and $\rho:=\inf_{z\in B_i} g_i(z)$.
Then $\rho>0$ from Assumption~\ref{asm:gauge_zero}.
If $\rho=+\infty$, that is, $\dom\, g_i = \{0\}$,
then $X_i(u, v) = \{0\}$ and this lemma holds.
Now, suppose that $\rho < \infty$.
Then, the feasible region is included in the compact set 
$\bar{B_i}:=\{ z\;|\; \|z\| \leq 1/\rho \}$ since for any $s\not \in \bar{B_i}$ we
have  $\|s\|> 1/\rho$ and
\[
g_i(s)=g_i(\|s\| s/\|s\|)=\|s\| g_i(s/\|s\|)> \frac{1}{\rho} \rho =1, 
\]  
which shows that $s$ is not a feasible solution of~\eqref{eq:prob_lem}.
Consequently, the feasible region of~\eqref{eq:prob_lem} is nonempty, convex and compact.

Since~\eqref{eq:prob_lem} is a convex problem with a nonempty, compact and convex feasible region, 
 the optimal solution set of~\eqref{eq:prob_lem} is nonempty, convex and compact.
\end{proof}

We now describe the concrete formulae for $\partial_v \Phi$ and $\partial_u \Phi$ by using $X_i (u, v)$ as follows.
\begin{lem}\label{lem:gauge_sub_dif}
Suppose that Assumptions~\ref{asm:gauge_lsc} and \ref{asm:gauge_zero} hold for function $\mathcal{G}$.
Then, we have 
\begin{equation}
\label{eqn:phi1}
\phi_i (u, v) = u^T A_i \bar{x}_{I_i} - v^T H_i \bar{x}_{I_i} - c_{I_i}^T \bar{x}_{I_i}\;\; for~all~ \bar{x}_{I_i} \in X_i(u, v),
\end{equation}
\begin{equation}
\label{eqn:phi2}
\partial_u \phi_i (u, v) = \{ \bar{x}_{I_i}^T A_i^T ~|~ \bar{x}_{I_i} \in X_i(u, v) \}
\end{equation}
and
\begin{equation}
\label{eqn:phi3}
\partial_v \phi_i (u, v) = \{ - \bar{x}_{I_i}^T H_i^T ~|~ \bar{x}_{I_i} \in X_i(u, v) \}.
\end{equation}
\end{lem}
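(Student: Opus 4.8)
The plan is to read off $\phi_i$ as the optimal value of $(\mathrm{P}_i)$, recognize it as a pointwise supremum of affine functions of $(u,v)$ over a compact index set, and then apply the classical \emph{Danskin-type} formula for the subdifferential of such a supremum. First I would establish \eqref{eqn:phi1}. Setting $\alpha := A_i^T u - H_i^T v - c_{I_i}$, the objective of $(\mathrm{P}_i)$ equals $u^T A_i x_{I_i} - v^T H_i x_{I_i} - c_{I_i}^T x_{I_i} = \alpha^T x_{I_i}$, while its constraint is $g_i(x_{I_i}) \leq 1$. Hence, by Definition~\ref{def:phf_polar}, the optimal value of $(\mathrm{P}_i)$ is exactly $g_i^\circ(\alpha) = \phi_i(u,v)$. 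Since Lemma~\ref{lem:gauge_sol_set} guarantees that $X_i(u,v) \neq \emptyset$, any maximizer $\bar x_{I_i} \in X_i(u,v)$ attains this value, which is precisely \eqref{eqn:phi1}. In particular $\phi_i$ is finite on all of $\mb{R}^k \times \mb{R}^\ell$, because the feasible set $C_i := \{x_{I_i} \mid g_i(x_{I_i}) \leq 1\}$ is compact by the argument inside the proof of Lemma~\ref{lem:gauge_sol_set}.

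Next, to obtain the subdifferentials, I would hold $v$ fixed and view $u \mapsto \phi_i(u,v)$ as the supremum over $x_{I_i} \in C_i$ of the affine maps $u \mapsto u^T A_i x_{I_i} + r(x_{I_i})$, where $r(x_{I_i}) := - v^T H_i x_{I_i} - c_{I_i}^T x_{I_i}$ does not depend on $u$. Each such affine map has gradient $A_i x_{I_i}$ in $u$, and the set of active indices at the point $(u,v)$ is exactly the maximizer set $X_i(u,v)$ of $(\mathrm{P}_i)$. Since $C_i$ is compact and the data depend continuously on $x_{I_i}$, the standard theorem on the subdifferential of a pointwise supremum of affine convex functions yields
\[
\partial_u \phi_i(u,v) = \overline{\mathrm{co}}\,\bigl\{ \bar x_{I_i}^T A_i^T \mid \bar x_{I_i} \in X_i(u,v) \bigr\}.
\]
Repeating the argument with $u$ fixed and differentiating in $v$ (each affine map having gradient $-H_i x_{I_i}$ in $v$) gives the analogous formula with $-\bar x_{I_i}^T H_i^T$. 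Treating $u$ and $v$ separately this way is deliberate: it sidesteps the well-known gap between a partial subdifferential and the projection of the joint subdifferential $\partial_{(u,v)}\phi_i$.

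Finally I would discharge the closed-convex-hull operation. By Lemma~\ref{lem:gauge_sol_set}, $X_i(u,v)$ is convex and compact; because $\bar x_{I_i} \mapsto \bar x_{I_i}^T A_i^T$ and $\bar x_{I_i} \mapsto -\bar x_{I_i}^T H_i^T$ are linear, their images over $X_i(u,v)$ are already convex and compact, so neither taking the convex hull nor taking the closure alters these sets. This removes $\overline{\mathrm{co}}$ and produces exactly \eqref{eqn:phi2} and \eqref{eqn:phi3}. The main obstacle is the careful invocation of the supremum/Danskin subdifferential theorem: one must verify the compactness of the index set $C_i$ (supplied by Lemma~\ref{lem:gauge_sol_set}) and differentiate separately in $u$ and $v$, after which the convexity and compactness of $X_i(u,v)$ make the result's closed convex hull collapse to the stated explicit image.
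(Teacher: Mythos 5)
Your proposal is correct and follows essentially the same route as the paper: the paper likewise reads \eqref{eqn:phi1} directly off the definitions of $g_i^\circ$ and $X_i(u,v)$, invokes the Danskin-type convex-hull formula for the partial subdifferentials of the supremum, and collapses the hull using the convexity and compactness of $X_i(u,v)$ from Lemma~\ref{lem:gauge_sol_set}. Your write-up merely makes explicit some steps the paper leaves implicit (compactness of the feasible set as the index set, the separate treatment of $u$ and $v$, and that linear images of convex compact sets are already closed and convex), so no substantive difference exists.
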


\begin{proof}
The first equation directly follows from the definitions of $g_i^\circ$ and $X_i(u, v)$.
Since the set $X_i(u, v)$ is nonempty, convex and compact from Lemma~\ref{lem:gauge_sol_set}, we obtain
\begin{eqnarray*}
\partial_u \phi_i (u, v) &=& \mathrm{co} \{ \bar{x}_{I_i}^T A_i^T ~|~ \bar{x}_{I_i} \in X_i(u, v) \} = \{ \bar{x}_{I_i}^T A_i^T ~|~ \bar{x}_{I_i} \in X_i(u, v) \},\\
\partial_v \phi_i (u, v) &=& \mathrm{co} \{ - \bar{x}_{I_i}^T H_i^T ~|~ \bar{x}_{I_i} \in X_i(u, v) \} = \{ - \bar{x}_{I_i}^T H_i^T ~|~ \bar{x}_{I_i} \in X_i(u, v) \},
\end{eqnarray*}
which are the desired formulae.
\end{proof}

Finally, we present the main result of this subsection,
which shows that it is possible to obtain a primal solution from a
KKT point of problem~\eqref{eq:MGO_dual}.

\begin{thm}(Primal recovery)\label{thm:gauge_rec}
Suppose that Assumptions~\ref{asm:gauge_conv}, \ref{asm:gauge_lsc} and \ref{asm:gauge_zero} hold
for the function~$\mathcal{G}$.
Assume also that  $(u^*, v^*, \lambda, \mu)\in  \mb{R}^k\times \mb{R}^\ell \times \mb{R}^m \times \mb{R}^\ell$, $V\in \partial_v \Phi(u^*, v^*)$ and $U \in \partial_u \Phi(u^*, v^*)$ satisfy the KKT conditions (\ref{eq:kkt1})--(\ref{eq:kkt4})
for the dual problem~\eqref{eq:MGO_dual}.
Then there exist  $\bar{x}_{I_i} \in X_i(u^*, v^*)$ for all $i=1,\ldots, m$ such that 
$U_i = (A_i \bar{x}_{I_i})^T$ and 
$V_i = - (H_i \bar{x}_{I_i})^T$. 
Moreover, suppose that $g_i(\bar{x}_{I_i})=1$ for $i$ such that $\lambda_i\neq 0$.
Let $x^*_{I_i} = \lambda_i  \bar{x}_{I_i}$ for all $i=1, \ldots, m$.
Then, $x^* = (x^*_{I_1}, \ldots, x^*_{I_m})^T$ is an optimal solution of~\eqref{eq:MGO}.
\end{thm}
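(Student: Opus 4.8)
The plan is to establish the two claims in turn and to conclude optimality of $x^*$ at the end by checking the sufficient optimality conditions~(i)--(v) of Theorem~\ref{thm:gauge_suf_op}, which then finishes the proof with no further work.

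First I would prove the existence of the common vectors $\bar x_{I_i}$. Each component satisfies $\phi_i(u,v)=g_i^\circ(A_i^T u - H_i^T v - c_i)$, i.e.\ $\phi_i$ is the support function $g_i^\circ$ of the set $\{x_{I_i}\mid g_i(x_{I_i})\le 1\}$ composed with the affine map $(u,v)\mapsto A_i^T u - H_i^T v - c_i$, whose subdifferential at that point is precisely the argmax set $X_i(u,v)$. By Lemma~\ref{lem:gauge_sol_set} this set is nonempty, and by Lemma~\ref{lem:gauge_sub_dif} the $i$-th rows of $\partial_u\Phi(u^*,v^*)$ and $\partial_v\Phi(u^*,v^*)$ are $\{(A_i \bar x_{I_i})^T\mid \bar x_{I_i}\in X_i(u^*,v^*)\}$ and $\{-(H_i\bar x_{I_i})^T\mid \bar x_{I_i}\in X_i(u^*,v^*)\}$, both parametrised by the single set $X_i(u^*,v^*)$. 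Since the stationarity conditions~\eqref{eq:kkt1}--\eqref{eq:kkt2} read off the $u$- and $v$-blocks of one subgradient of $\Phi$ in the joint variable $(u,v)$, and that joint subgradient is $((A_i \bar x_{I_i})^T,-(H_i\bar x_{I_i})^T)$ for a single selection $\bar x_{I_i}\in X_i(u^*,v^*)$, the same $\bar x_{I_i}$ realises $U_i=(A_i\bar x_{I_i})^T$ and $V_i=-(H_i\bar x_{I_i})^T$ simultaneously.

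Next I would set $x^*_{I_i}=\lambda_i\bar x_{I_i}$ and establish the identity $\mathcal{G}(x^*)=\lambda$, which drives the whole verification. Indeed $\lambda\ge 0$ by~\eqref{eq:kkt3}, so positive homogeneity applies; for $\lambda_i\ne 0$ the normalisation $g_i(\bar x_{I_i})=1$ gives $g_i(x^*_{I_i})=\lambda_i g_i(\bar x_{I_i})=\lambda_i$, while for $\lambda_i=0$ we have $x^*_{I_i}=0$ and $g_i(0)=0=\lambda_i$; feasibility $\bar x_{I_i}\in X_i(u^*,v^*)$ also gives $x^*\in\dom\mathcal{G}$. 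Using $A_i\bar x_{I_i}=U_i^T$ and $H_i\bar x_{I_i}=-V_i^T$ from the first part, I then compute $Ax^*=U^T\lambda=b$ by~\eqref{eq:kkt2} and $Hx^*+K\mathcal{G}(x^*)=-V^T\lambda+K\lambda=p-\mu\le p$ by~\eqref{eq:kkt1} together with $\mu\ge0$, which gives primal feasibility~(i). The same computation yields $p-Hx^*-K\mathcal{G}(x^*)=\mu$, so complementarity~(iv) reduces to $\mu_i v_i^*=0$, i.e.\ the complementarity in~\eqref{eq:kkt4}; complementarity~(iii) follows from the componentwise complementarity in~\eqref{eq:kkt3} after substituting $g_i(x^*_{I_i})=\lambda_i$; and dual feasibility~(ii) is exactly the feasibility part of~\eqref{eq:kkt3}--\eqref{eq:kkt4}. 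Finally, the alignment condition~(v) follows from formula~\eqref{eqn:phi1}: since $\phi_i(u^*,v^*)=(A_i^T u^* - H_i^T v^* - c_i)^T\bar x_{I_i}$, scaling by $\lambda_i$ and summing gives $\Phi(u^*,v^*)^T\mathcal{G}(x^*)=\Phi(u^*,v^*)^T\lambda=(A^T u^* - H^T v^* - c)^T x^*$. With~(i)--(v) in hand, Theorem~\ref{thm:gauge_suf_op} shows that $x^*$ is optimal for~\eqref{eq:MGO}.

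I expect the hard part to be the first claim, namely that a single $\bar x_{I_i}\in X_i(u^*,v^*)$ produces both the row $U_i$ of $\partial_u\Phi$ and the row $V_i$ of $\partial_v\Phi$. This linkage is not automatic from the separate memberships $U\in\partial_u\Phi$ and $V\in\partial_v\Phi$; it has to be traced back to the fact that both partial subdifferentials of $\phi_i$ are generated by one and the same subgradient of $g_i^\circ$ evaluated at the shared argument $A_i^T u^* - H_i^T v^* - c_i$, so that the KKT stationarity is correctly understood through the joint subdifferential $\partial_{(u,v)}\phi_i$. Once this is secured, the remaining steps are the bookkeeping verification of~(i)--(v), all of which are powered by the elementary identity $\mathcal{G}(x^*)=\lambda$.
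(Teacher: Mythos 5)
Your proposal follows the paper's proof essentially step for step: the same use of Lemma~\ref{lem:gauge_sub_dif} to extract $\bar x_{I_i} \in X_i(u^*,v^*)$ with $U_i=(A_i\bar x_{I_i})^T$ and $V_i=-(H_i\bar x_{I_i})^T$, the same key identity $\mathcal{G}(x^*)=\lambda$, and the same verification of conditions (i)--(v) of Theorem~\ref{thm:gauge_suf_op} from the KKT conditions \eqref{eq:kkt1}--\eqref{eq:kkt4}, concluding by sufficiency and weak duality. The only divergence is that you explicitly justify why a \emph{single} $\bar x_{I_i}$ realises both $U_i$ and $V_i$ by reading the stationarity conditions through the joint subdifferential $\partial_{(u,v)}\phi_i$ --- a genuine subtlety that the paper's proof passes over silently when it deduces a common $\bar x_{I_i}$ from the separate memberships $U_i\in\partial_u\phi_i$ and $V_i\in\partial_v\phi_i$ --- so your argument is, if anything, slightly more careful than the original.
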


\begin{proof}
From the definitions of $\Phi$ and $\mathcal{G}^\circ$, we have
\[
\Phi(u^*, v^*) = \mathcal{G}^\circ(A^T u^* - H^T v^* - c) =
\left(
\begin{array}{c}
g^\circ_1 (A_1^T u^* - H_1^T v^* - c_{I_1}) \\
\vdots \\
g^\circ_m (A_m^T u^* - H_m^T v^* - c_{I_m})
\end{array}
\right) =
\left(
\begin{array}{c}
\phi_1 (u^*, v^*) \\
\vdots \\
\phi_m (u^*, v^*)
\end{array}
\right).
\]
Moreover, since
\[
U \in \partial_u \Phi(u^*, v^*) \subseteq
\left(
\begin{array}{c}
\partial_u \phi_1 (u^*, v^*) \\
\vdots \\
\partial_u \phi_m (u^*, v^*)
\end{array}
\right),
\]
we have $U_i \in \partial_u \phi_i(u^*, v^*)$.
In a similar way, we have $V_i \in \partial_v \phi_i(u^*, v^*)$.
It then follows from~(\ref{eqn:phi2}) and (\ref{eqn:phi3}) in Lemma~\ref{lem:gauge_sub_dif} that,
for all $i=1,\ldots, m$, there exist $\bar{x}_{I_i} \in X_i(u^*, v^*)$, such that $U_i = (A_i \bar{x}_{I_i})^T$ and 
$V_i = - (H_i \bar{x}_{I_i})^T$.

Now let  $x^*_{I_i} = \lambda_i \bar{x}_{I_i}$, $i=1, \ldots, m$, and $x^* = (x^*_{I_1}, \ldots, x^*_{I_m})^T$.
We show that $x^*$ and $(u^*,v^*)$ satisfy the sufficient conditions (i)--(v) in Theorem \ref{thm:gauge_suf_op}.
Note that the dual feasibility (ii) clearly holds. Moreover, since the assumption on $g_i(\bar{x}_{I_i})$ implies 
\[
g_i(x^*_{I_i})=g_i\left( \lambda_i \bar{x}_{I_i} \right)= \lambda_i g_i(\bar{x}_{I_i})=\lambda_i,
\]
we obtain
\begin{equation}
\label{eqn:gl}
\mathcal{G}(x^*)=\lambda.
\end{equation}

We first show that the alignment condition (v) holds. 
From~(\ref{eqn:phi1}) in Lemma~\ref{lem:gauge_sub_dif}, we have
\[
g_i^\circ(A_i^T u^* - H_i^T v^* - c_{I_i})=\phi_i(u^*, v^*)=(u^*)^T A_i \bar{x}_{I_i} - (v^*)^T H_i \bar{x}_{I_i} - c_{I_i}^T \bar{x}_{I_i}.
\]
It then follows from~(\ref{eqn:gl}) that
\begin{eqnarray*}
g_i^\circ(A_i^T u^* - H_i^T v^* - c_{I_i})^T g_i(x^*_{I_i}) &=& \lambda_i ((u^*)^T A_i \bar{x}_{I_i} - (v^*)^T H_i \bar{x}_{I_i} - c_{I_i}^T \bar{x}_{I_i}) \\
	&=& (u^*)^T A_i x^*_{I_i} - (v^*)^T H_i x^*_{I_i} - c_{I_i}^T x^*_{I_i} \\
	&=& (A_i^T u^* - H_i^T v^* - c_{I_i})^T x^*_{I_i},
\end{eqnarray*}
which shows that condition (v) holds.

Next we prove the primal feasibility (i). From the definition of $x^*$, we obtain
\[
A x^* = \sum_{i=1}^m \lambda_i A_i \bar{x}_{I_i} = \sum_{i=1}^m \lambda_i U_i^T = U^T \lambda=b,
\]
where the second equality follows from~(\ref{eqn:phi2}) in Lemma~\ref{lem:gauge_sub_dif} and the last equality is due to
the KKT condition~\eqref{eq:kkt2}.  
Moreover, we have from (\ref{eqn:phi3}) in Lemma \ref{lem:gauge_sub_dif} that
\begin{equation}
H x^* = \sum_{i=1}^m \lambda_i H_i \bar{x}_{I_i} = - \sum_{i=1}^m \lambda_i V_i^T = - V^T \lambda. 
\label{eq:rec2} 
\end{equation}
It then follows from (\ref{eqn:gl}) that
\[
H x^* + K \mathcal{G}(x^*) = - V^T \lambda+ K \lambda
=p-\mu \leq p,
\]
where the equality and the inequality follow from the KKT conditions~\eqref{eq:kkt1} and \eqref{eq:kkt4}, respectively.
Consequently, $x^*$ is a feasible solution of~\eqref{eq:MGO}.

Finally, we show that the complementarity conditions (iii) and (iv) hold. 
First we consider condition (iii) as follows.
If $\lambda_i=0$, then $x^*_{I_i}=0$ and $g_i(x^*_{I_i})=0$, and hence (iii) holds.
If $\lambda_i\neq 0$, then $\left[ d + K^T v^* - \mathcal{G}^*(A^T u^* - H^T v^* - c) \right]_i=0$ from the KKT condition (\ref{eq:kkt3}) and the definition of $\Phi$.
Therefore, (iii) also holds.

Next we prove that condition (iv) is satisfied.
If $v^*_i=0$, then (iv) clearly holds. For this reason, we consider the case where $v^*_i\neq 0$.
In such a case, $\mu_i=0$ from the KKT condition (\ref{eq:kkt4}), and hence $\left[ p+V^T\lambda-
K\lambda\right]_i=0$ from the KKT condition (\ref{eq:kkt1}). 
It then follows from (\ref{eqn:gl})  and  (\ref{eq:rec2})  that
\[
0=
\left[p+V^T\lambda-
K\lambda\right]_i =\left[p-Hx^*-
K\lambda\right]_i = \left[p-Hx^*-
 K \mathcal{G}(x^*) \right]_i.
\]
Therefore, the complementarity condition (iv) holds.

From the previous discussion, we conclude that $x^*$ and $(u^*, v^*)$ satisfy all sufficient conditions for optimality, 
and hence $x^*$ is an optimal solution of~\eqref{eq:MGO}. 
\end{proof}

Observe that the assumption that $g_i(\bar x_{I_i})=1$ for all $i$ such that $\lambda_i \neq 0$
seems to be rather restrictive. One sufficient condition
for the assumption is that the effective domain of $g_i$  is $\mb{R}^{n_i}$  and
$A_i^T u^*-H_i^Tv^*-c_{I_i}\neq 0$ for all $i$. Under these conditions, the solution set $X_i(u^*,v^*)$
is included in the boundary of the feasible set of~\eqref{eq:prob_lem}, and thus $g_i(\bar{x}_{I_i})=1$ for all 
$\bar{x}_{I_i} \in X_i(u^*,v^*)$.


\section{Duality for general convex optimization}
\label{sec:gen_convex}

In this section we extend the previous results for gauge optimization to more general convex optimization problems.
To this end, we first decompose general convex function of the problem, which is not necessarily nonnegative,
into a linear and a nonnegative convex functions.
Then, we consider the so-called \emph{perspective}~\cite{ABD2017,ABF2013} for the nonnegative convex function.
The perspective function is a gauge one essentially equivalent to the original nonnegative convex function.
Consequently, we reformulate the general convex function into a sum of linear function and a gauge one.
The reformulation enables us to apply the results in the previous section for a general convex optimization problem.

\subsection{Reformulation of a general convex function into sum of  linear and gauge functions}\label{sec:dec}

Let us first observe that a convex function~$f \colon \R^n \rightarrow \R \cup \{\infty \}$
can be written as a sum of a linear function and a nonnegative convex one.
Let $z \in \dom f$ be a fixed vector, and let $\eta \in \partial f(z)$.
We can write
\begin{eqnarray}\label{eq:reform}
f(x) = f(x) - f(z) - \eta^T ( x-z ) + f(z) + \eta^T(x-z).
\end{eqnarray}
Note that $f(x)-f(z)-\eta^T(x-z)$ is convex and nonnegative with respect to $x$,
because $f$ satisfies the subgradient inequality \cite[p. 214]{R1972}: $f(x) \geq f(z) + \eta^T (x - z)$.
Moreover, the remaining term: $f(z)+\eta^T(x-z)$ is linear with respect to $x$.
Thus, function $f$ can be split into a nonnegative convex function and a linear one.

Next, we reformulate a nonnegative convex function into a gauge function through the so-called \emph{perspective} of a nonnegative convex function.
Recall that for any nonnegative convex function $h \colon \mb{R}^n \rightarrow \mb{R}_+ \cup \{\infty \}$,
its perspective $h^p \colon \mb{R}^{n+1} \rightarrow \mb{R} \cup \{\infty \}$ is described as
\[
h^p (x, \zeta) :=
\left\{ \begin{array}{ll}
\zeta h(\zeta^{-1} x)	& \mathrm{if} \: \zeta > 0, \\
\delta_{\{0\}}(x)				& \mathrm{if} \: \zeta = 0, \\
\infty						& \mathrm{if} \: \zeta < 0,
\end{array} \right.
\]
and its closure can be written by
\begin{eqnarray}\label{eq:pers_def}
h^\pi (x, \zeta) :=
\left\{ \begin{array}{ll}
\zeta h(\zeta^{-1} x)	& \mathrm{if} \: \zeta > 0, \\
h^{\infty}(x)				& \mathrm{if} \: \zeta = 0, \\
\infty						& \mathrm{if} \: \zeta < 0,
\end{array} \right.
\end{eqnarray}
where $h^{\infty}$ is the \emph{recession function} of $h$~\cite[p. 66]{R1972}.
Note that if $h$ is a proper convex function,
then $h^\pi$ is a positively homogeneous proper convex function~\cite[Theorem 8.5]{R1972}.
In addition, $h^\pi(0, 0) = 0$ by definition, and hence $h^\pi$ becomes gauge.
Therefore, $h$ is represented as the gauge function $h^\pi (x, \zeta)$ with $\zeta = 1$.
Consequently, $f$ can be described as a sum of the linear function $f(z) + \eta^T (x-z)$ and a gauge function $h^\pi(x, 1)$,
where $h(x) = f(x) - f(z) - \eta^T (x- z)$.
We present an example of perspective and its polar.

\begin{exm}\label{exm:quad}
Let $f \colon \R^n \rightarrow \R$ be defined as $f(x) := \frac{1}{2} x^T A x$, where $A$ is an $n \times n$
symmetric positive definite matrix.
Then, the perspective and its polar of the quadratic function $f$ are described as follows:
\[
\begin{array}{rcl}
f^\pi(x, \zeta)		& = &	\left\{ \begin{array}{cl}
						\dfrac{1}{2 \zeta}x^T A x	& \mathrm{if} \: \zeta > 0, \\
						\delta_{\{0\}}(x)			& \mathrm{if} \: \zeta = 0, \\
						\infty						& \mathrm{otherwise},
						\end{array} \right. \vspace{1mm} \\
f^\natural(y, \eta)	& = &	\left\{ \begin{array}{cl}
						- \dfrac{1}{2 \eta}y^T A^{-1} y	& \mathrm{if} \: \eta < 0, \\
						\delta_{\{0\}}(y)				& \mathrm{if} \: \eta = 0, \\
						\infty						& \mathrm{otherwise}.
						\end{array} \right.
\end{array}
\]
\end{exm}

\begin{proof}
The perspective $f^\pi$ directly follows from definition \eqref{eq:pers_def}.
Note that $A$ is positive definite, hence $f^\infty = \delta_{\{0\}}$ \cite[p. 68]{R1972}.
The polar of $f^\pi$ is defined by
\begin{eqnarray}\label{ex1_fn}
f^\natural (y, \eta) = \sup_{x, \zeta} \left\{ x^T y + \zeta \eta \mid f^\pi(x, \zeta) \leq 1 \right\}.
\end{eqnarray}

We first consider the case where $\eta > 0$.
Since $f^\pi(0, \zeta) = 0$ for $\zeta \ge 0$,
$f^\natural (y, \eta) \geq \zeta \eta$ for $\zeta \ge 0$.
Then $f^\natural (y, \eta) \rightarrow \infty$ as $\zeta \rightarrow \infty$.
Next suppose that $\eta = 0$ and $y \neq 0$.
Let $x(t) := t y$ with $t > 0$, and let $\zeta(t) := \frac{1}{2} x(t)^T A x(t)$.
Since $A$ is positive definite, $\zeta(t) = \frac{1}{2} x(t)^T A x(t) > 0$.
Then $f^\pi (x(t), \zeta(t)) = 1$ for all $t$.
Consequently $f^\natural (y, 0) \ge x(t)^T y + \zeta(t) \cdot 0 = t \|y \|^2$,
and hence $f^\natural (y, 0) \rightarrow \infty$ as $t \rightarrow \infty$.

Next, we study the case where $y = 0$ and $\eta \le 0$.
If $(y, \eta) = (0,0)$, then $f^\natural (y, \eta) = 0$.
Note that $f^\pi (x, \zeta) \le 1$ implies $\zeta \ge 0$, and $f^\pi (0, 0) \le 1$.
Therefore, when $y = 0$ and $\eta < 0$ we have $f^\natural (y, \eta) = 0$.

Finally, we investigate the case where $y \neq 0$ and $\eta < 0$.
We now set
\begin{eqnarray}\label{ex_1}
x^* = - \dfrac{1}{\eta} A^{-1} y, \quad \zeta^* = \dfrac{1}{2 \eta^2} y^T A^{-1} y,
\end{eqnarray}
and
\begin{eqnarray}\label{ex_kkt1}
\lambda^* = - \eta \dfrac{2(\zeta^*)^2}{(x^*)^T A x^*}.
\end{eqnarray}
Since $x^* \neq 0$ and $\zeta^* > 0$, $\lambda^*$ is well-defined and $\lambda^* > 0$.
Moreover, we have from~\eqref{ex_1}
\[
\dfrac{1}{2}(x^*)^T A x^* = \dfrac{1}{2 \eta^2} y^T A^{-1} y = \zeta^*.
\]
It then follows from~\eqref{ex_kkt1} that
\begin{eqnarray}\label{ex_2}
\eta = - \dfrac{\lambda^*}{\zeta^{*}}.
\end{eqnarray}
Then, equations~\eqref{ex_1} and \eqref{ex_2} give
\begin{eqnarray}\label{ex_kkt2}
- y + \dfrac{\lambda^*}{\zeta^*} A x^* = 0.
\end{eqnarray}
We note that the following conditions also hold:
\begin{eqnarray}
\dfrac{1}{2\zeta^*} x^{*T} A x^* - 1 \leq 0, \lambda^* \geq 0, \label{ex_kkt3} \\
\lambda^* \left(\dfrac{1}{2\zeta^*} x^{*T} A x^* - 1 \right) = 0. \label{ex_kkt4}
\end{eqnarray}
Note also that $f^\pi(x, \zeta) = \frac{1}{2\zeta} x^T A x$.
Conditions~\eqref{ex_kkt1}, \eqref{ex_kkt2}, \eqref{ex_kkt3} and \eqref{ex_kkt4}
are the KKT conditions of the convex optimization problem in the right-hand of~\eqref{ex1_fn}.
Therefore, the point $(x^*, \zeta^*)$ is its global optimal solution.
Consequently, we obtain
\[
f^\natural (y, \eta) = (x^*)^T y + \zeta^* \eta = - \dfrac{1}{2 \eta} y^T A^{-1} y,
\]
which completes the proof.
\end{proof}

We now consider a vector function $F \colon \mb{R}^n \rightarrow (\mb{R} \cup \infty)^m$,
which is defined by
$F(\cdot) := (f_1(\cdot), \ldots, f_m(\cdot))$ with nonnegative convex functions
$f_i \colon \mb{R}^{n_i} \rightarrow \mb{R} \cup \{\infty\}$, $i=1,\dots, m$.
We then define its perspective $F^\pi \colon \mb{R}^{n+m} \rightarrow (\mb{R} \cup \infty)^m$ as
$F^\pi(\cdot) := (f^\pi_1(\cdot), \ldots, f^\pi_m(\cdot))$ with $f^\pi_i \colon \mb{R}^{n_i + 1} \rightarrow \mb{R} \cup \{\infty\}$.
For simplicity, we denote $F^\pi(x, \zeta) = (f^\pi_1(x_1, \zeta_1), \ldots, f^\pi_m(x_m, \zeta_m))$
for any $x \in \mb R^n$ and $\zeta \in \mb R^m$.
We also denote the polar of $F^\pi$ as
$F^\natural (\cdot) := (F^\pi)^\circ (\cdot) = ((f^\pi_1)^\circ(\cdot), \ldots, (f^\pi_m)^\circ(\cdot))$.
Note that $F^\pi (x, e_m) = (f^\pi_1(x_1, 1), \ldots, f^\pi_m(x_m, 1)) = F (x)$ by definition.
We also observe that $F^\pi$ is a vector gauge function
if $f_i$ is an nonnegative proper convex function for all~$i$.


\subsection{Perspective dual problems}

We now consider the following nonconvex optimization problem:
\begin{equation}
\left.
\begin{array}{lrl}
		& \min	 	& c^T x + d^T F(x) \\
 		& \mathrm{s.t.} & A x = b, \\
		& 		& H x + K F(x) \leq p,
\end{array}
\right. \tag{\mbox{$\mathrm{P}_F$}}
\label{eq:PF}
\end{equation}
where $F$ is an nonnegative vector convex function,
that is, each component function $f_i$ is an nonnegative convex function.
By using the perspective function of $F$,
we reformulate~\eqref{eq:PF} into a gauge optimization:
\begin{equation}
\left.
\begin{array}{lrl}
		& \min	 	& \hat c^T z + d^T F^\pi(z) \\
 		& \mathrm{s.t.} & \hat A z = \hat b, \\
		& 		& \hat H z + K F^\pi(z) \leq p,
\end{array}
\right. \tag{\mbox{$\mathrm{P}_\pi$}}
\label{eq:Ppi}
\end{equation}
where $F^\pi \colon \mb{R}^{n+m} \rightarrow \mb{R}^m$ is the perspective of $F$, 
$z = (x_{I_1}, \zeta_1, \dots, x_{I_m}, \zeta_m)^T \in \mb{R}^{n+m}$,
$\hat c = (c_{I_1}, 0, \ldots, c_{I_m} , 0)^T \in \mb{R}^{n+m}$,
$\hat b = (b, 1, \dots, 1)^T \in \mb{R}^{2m}$,
$\hat H = [H_{I_1}, 0, \dots, H_{I_m}, 0] \in \mb{R}^{\ell \times (n+m)}$ and
\[
\hat A =
\left[
\begin{array}{ccccc}
A_{I_1}		& 0	&	\cdots 		& A_{I_m} 	& 0 \\
0 			& 1	&	\cdots		& 0			& 0	 \\
\vdots	&\vdots	& \ddots	&\vdots	&\vdots	\\
0	&0	&\cdots	&0	&1
\end{array}
\right] \in \mb{R}^{2m \times (n+m)},
\]
where $A_{I_i}$ is a submatrix of $A$ with $A_j, j\in I_i$ as its columns.

We obtain the PHO dual of~\eqref{eq:Ppi} as follows:
\begin{equation}
\left.
\begin{array}{lrl}
		& \max	 	& b^T u - p^T v + e_m^T w \\
 		& \mathrm{s.t.} & F^\natural
		\left(
		\begin{array}{c}		
		(A_{I_1})^T u - (H_{I_1})^T v - c_{I_1} \\
		w_1 \\
		\vdots \\
		(A_{I_m})^T u - (H_{I_m})^T v - c_{I_m} \\
		w_m
		\end{array}
		\right) - K^T v \leq d, \\
		& 		& v \geq 0.
\end{array}
\right. \tag{\mbox{$\mathrm{D}_\pi$}}
\label{eq:Dpi}
\end{equation}
We call problem~\eqref{eq:Dpi} as the perspective dual of~\eqref{eq:PF}.

We now consider the following convex quadratic optimization problem as an example of~\eqref{eq:PF}.
\begin{equation}
\left.
\begin{array}{lrl}
		& \min	 	& \dfrac{1}{2} x^T A_0 x + b_0^T x \vspace{1mm} \\
 		& \mathrm{s.t.} & \dfrac{1}{2} x^T A_1 x + b_1^T x \le c_1,
\end{array}
\right. \tag{\mbox{$\mathrm{P_{QP}}$}}
\label{eq:qp}
\end{equation}
where $A_0$ and $A_1$ are symmetric and positive definite matrices.
The problem can be rewritten as
\[
\left.
\begin{array}{lrl}
		& \min	 	& \dfrac{1}{2} x^T A_0 x + b_0^T x \vspace{1mm} \\
 		& \mathrm{s.t.} & \dfrac{1}{2} y^T A_1 y + b_1^T y \le c_1, \\
		&			& x - y = 0.
\end{array}
\right.
\]
Let $z := (x, y)^T$ and $F(z) := (f_0(x), f_1(y))^T = (\frac{1}{2} x^T A_0 x, \frac{1}{2} y^T A_1 y)^T$.
Then the problem is described as follows:
\[
\left.
\begin{array}{lrl}
		& \min	 	& (b_0^T, 0) z + (1, 0) F(z) \vspace{1mm} \\
 		& \mathrm{s.t.} & (0, b_1^T) z + (0, 1) F(z) \le c_1, \\
		&			& (I, -I) z = 0.
\end{array}
\right.
\]
Let $w := (x, \zeta_1, y, \zeta_2) \in \mb R^{2n+2}$ and $F^\pi(w) := (f_0^\pi(x, \zeta_1), f_1^\pi(y, \zeta_2))$.
Then, a gauge optimization~\eqref{eq:Ppi} equivalent to~\eqref{eq:qp} is written as
\begin{equation}
\left.
\begin{array}{lrl}
		& \min	 	& (b_0^T, 0, 0, 0) w + (1, 0) F^\pi(w) \vspace{1mm} \\
 		& \mathrm{s.t.} & (0, 0, b_1^T, 0) w + (0, 1) F^\pi(w) \le c_1, \vspace{1mm} \\
		&			& \left[
\begin{array}{cccc}
I		& 0	&	-I 		&  0 \\
0 		& 1	&	0		&  0	 \\
0		& 0	&	0		& 1
\end{array}
\right]  w = \left[
		\begin{array}{c}		
		0 \\
		1 \\
		1
		\end{array}
		\right].
\end{array}
\right.\tag{\mbox{$\mathrm{P}_\pi^\mathrm{QP}$}}
\label{eq:qp_pi}
\end{equation}
Let $F^\natural := (f_0^\natural, f_1^\natural)$ be the polar of $F^\pi$.
Then the PHO dual of~\eqref{eq:qp_pi} is given as
\[
\left.
\begin{array}{lrl}
		& \max	 	& (0, 1, 1) u - c_1 v \\
 		& \mathrm{s.t.} & F^\natural
		\left(
		\left[
		\begin{array}{ccc}		
		I & 0 & 0 \\
		0 & 1 & 0 \\
		-I & 0 & 0 \\
		0 & 0 & 1
		\end{array}
		\right]u - \left[
		\begin{array}{c}		
		0 \\
		0 \\
		b_1 \\
		0
		\end{array}
		\right] v - \left[
		\begin{array}{c}		
		b_0 \\
		0 \\
		0 \\
		0
		\end{array}
		\right]
		 \right) - \left[
		\begin{array}{c}		
		0 \\
		1
		\end{array}
		\right] v \leq \left[
		\begin{array}{c}		
		1 \\
		0
		\end{array}
		\right], \\
		& 		& v \geq 0.
\end{array}
\right.
\]
Let $u = (u_1, u_2, u_3)^T \in \R^n \times \R \times \R$.
Then the dual problem can be further rewritten as
\begin{equation}
\left.
\begin{array}{lrl}
		& \max	 	& u_2 + u_3 - c_1v \\
 		& \mathrm{s.t.} & f_0^\natural (u_1 - b_0, u_2) \le 1, \\
		&			& f_1^\natural (-u_1 - b_1 v, u_3) \le v, \\
		&			& v \ge 0.
\end{array}
\right.\tag{\mbox{$\mathrm{D}_\pi^\mathrm{QP}$}}
\label{eq:qp_pi_dual}
\end{equation}
Recall that the functions $f_0^\natural$ and $f_1^\natural$ are described as in Example~\ref{exm:quad}.
It is easy to see that $(u, v)$ with $u_2 > 0$ or $u_3 > 0$ is not feasible for~\eqref{eq:qp_pi_dual}.

The following lemma indicates the first two constraints in~$\eqref{eq:qp_pi_dual}$
can be represented as semidefinite constraints.

\begin{lem}\label{lem:sdp}
Let $f(x)=\frac{1}{2} x^T A x$,
where $A$ is an $n \times n$ symmetric and positive definite matrix.
Then,
\begin{eqnarray}\label{eq:sdp_ineq}
f^\natural(y,\eta) \le \gamma, \quad \gamma \ge 0
\end{eqnarray}
if and only if 
\begin{eqnarray}\label{eq:sdp}
\left[
\begin{array}{cc}
A \gamma 	&  y \\
y^T		&  -2 \eta
\end{array}
\right] \succeq 0.
\end{eqnarray}
\end{lem}

\begin{proof}
First we suppose that \eqref{eq:sdp_ineq} holds.
The inequality $f^\natural(y,\eta) \le \gamma$ implies $(y, \eta) = (0, 0)$ or $\eta < 0$
from the definition of $f^\natural$ in Example~\ref{exm:quad}.
If $(y, \eta) = (0, 0)$,
then \eqref{eq:sdp} holds since $A$ is positive definite and $\gamma \geq 0$.
If $\eta < 0$, then \eqref{eq:sdp_ineq} can be written as
\[
-\dfrac{1}{2 \eta} y^T A^{-1} y \le \gamma, \quad \gamma \ge 0.
\]
If $\gamma = 0$, then we have $y=0$, and hence \eqref{eq:sdp} holds.
When $\gamma > 0$, we obtain
\begin{eqnarray}\label{eq:sdp_ineq_2}
-2 \eta - \dfrac{1}{\gamma} y^T A^{-1} y \ge 0, \quad \gamma \ge 0,
\end{eqnarray}
which results in \eqref{eq:sdp} by using the Schur complement~\cite{BV2004}.

Next we assume that~\eqref{eq:sdp} holds.
Then, we have $\eta \le 0$ and $\gamma \ge 0$.
If $\eta = 0$, then $y = 0$ from \eqref{eq:sdp}.
It then follows from Example~\ref{exm:quad} that
$f^\natural(y,\eta) = \delta_{\{0\}}(0) = 0 \le \gamma$, and hence~\eqref{eq:sdp_ineq} holds.
If $\gamma = 0$, then $y = 0$ once again.
Then we obtain $f^\natural(y,\eta) = 0 = \gamma$,
which indicates~\eqref{eq:sdp_ineq} holds.
If $\eta < 0$ and $\gamma > 0$, then the Schur complement of~\eqref{eq:sdp}
gives~\eqref{eq:sdp_ineq_2},
which results in~\eqref{eq:sdp_ineq}.
\end{proof}

From Lemma~\ref{lem:sdp}, the perspective dual problem~\eqref{eq:qp_pi_dual}
of problem~\eqref{eq:qp} is equivalent to
the following semidefinite programming~\cite{VB1996}:
\[
\left.
\begin{array}{lrl}
		& \max	 	& u_2 + u_3 - c_1v \vspace{1mm} \\ 
		& \mathrm{s.t.} &  \left[
		\begin{array}{cc}		
		A_0 & u_1 - b_0  \\
		(u_1 - b_0)^T & -2 u_2
		\end{array}
		\right] \succeq 0, \vspace{1mm} \\
		&			& \left[
		\begin{array}{cc}		
		A_1 v & u_1 + b_1 v  \\
		(u_1 + b_1 v)^T & -2u_3
		\end{array}
		\right] \succeq 0.
\end{array}
\right.
\]


\section{Conclusion}
\label{sec:conclusion}

In this paper, we considered optimization problems with both gauge functions and linear functions
in their objective and constraint functions.
Using the positively homogeneous framework given in~\cite{YY2017},
we proved that weak and strong duality results hold for such gauge problems. 
We also discussed both necessary and sufficient optimality conditions associated to these problems,
showing that it is possible to obtain a primal solution by solving the dual problem.
We also extended the results for gauge problems to general convex optimization problems.
An important future work is to develop an efficient algorithm by using the theoretical results described here.


\appendix

\section{Appendix}
\label{sec:appendix}

If each function $\psi_i$, $i=1,\ldots, m$ is gauge,
then the equivalence of~\eqref{eq:PHO_dual} and~\eqref{eq:PHO_Ldual}
is proved as follows.

The Lagrangian function of~\eqref{eq:PHO} is written by
\begin{eqnarray*}
{\cal L}(x, u, v) &=& c^T x + b^T \Psi(x) + u^T (b - A x - B \Psi(x)) - v^T (p - H x - K \Psi(x)) \\
    &=& b^T u - p^T v + x^T (H^T v + c - A^T u) + \Psi(x)^T (d - B^T u + K^T v).
\end{eqnarray*}
Then we obtain the dual function as
\begin{eqnarray*}
\omega(u,v) &=& \inf_{x\in \mathrm{dom}\Psi} {\cal L}(x, u, v)\\
    &=& b^T u - p^T v + \inf_{x\in \mathrm{dom}\Psi} \{ x^T (H^T v + c - A^T u) + \Psi(x)^T (d - B^T u + K^T v) \}\\
    &=& b^T u - p^T v + \sup_{x\in \mathrm{dom}\Psi} \{ x^T (A^T u - H^T v - c) - \Psi(x)^T (d - B^T u + K^T v) \}\\
    &=& b^T u - p^T v + \sum_{i=1}^m \sup_{x\in \mathrm{dom}\Psi} \{ x_{I_i}^T (A_{I_i}^T u - H_{I_i}^T v - c_{I_i}) - \psi_i(x_{I_i}) (d_i - B_i^T u + K_i^T v) \}.
\end{eqnarray*}
The third summation term can be rewritten as
\begin{eqnarray*}
\lefteqn{\sum_{i=1}^m \sup_{x\in \mathrm{dom}\Psi} \{ x_{I_i}^T (A_{I_i}^T u - H_{I_i}^T v - c_{I_i}) - \psi_i(x_{I_i}) (d_i - B_i^T u + K_i^T v) \}}  \\
	&=& \sum_{i=1}^m (d_i - B_i^T u + K_i^T v) \sup_{x\in \mathrm{dom}\Psi} \left\{ \dfrac{x_{I_i}^T (A_{I_i}^T u - H_{I_i}^T v - c_{I_i})}{(d_i - B_i^T u + K_i^T v)} - \psi_i(x_{I_i}) \right\}\\
	&=& \sum_{i=1}^m (d_i - B_i^T u + K_i^T v) \sup_{x\in \mathrm{dom}\Psi} \left\{ x_{I_i}^T y(u,v) - \psi_i(x_{I_i}) \right\}\\
	&=& \sum_{i=1}^m (d_i - B_i^T u + K_i^T v) \psi_i^*(y(u,v)),
\end{eqnarray*}
where we denote $y(u,v) = (A_{I_i}^T u - H_{I_i}^T v - c_{I_i}) / (d_i - B_i^T u + K_i^T v)$, and the third equality follows from the definition of conjugate function. From item (iv) of Proposition 2.1 in~\cite{FMP2014}, we have
\[
\sum_{i=1}^m (d_i - B_i^T u + K_i^T v) \psi_i^*(y(u,v)) = \sum_{i=1}^m (d_i - B_i^T u + K_i^T v) \delta_{\Omega_i}(y(u,v)),\\
\]
where $\Omega_i := \{(u,v): \psi_i^\circ(A_{I_i}^T u - H_{I_i}^T v - c_{I_i}) \leq d_i - B_i^T u + K_i^T v \}$. Finally, we obtain the Lagrangian dual problem of~\eqref{eq:PHO} as
\begin{equation*}
\left.
\begin{array}{lrl}
		& \max	 	& b^T u - p^T v \\
 		& \mathrm{s.t.} & \Psi^\circ(A^T u - H^T v - c) + B^T u - K^T v \leq d, \\
		& 		& v \geq 0,
\end{array}
\right.
\end{equation*}
which is the same as problem~\eqref{eq:PHO_dual}.


\bibliographystyle{plain}
\bibliography{reference}

\end{document}